\newlength\oldparindent
\renewcommand{\subparagraph}[1]{\medskip\noindent\textbf{#1}}
\setlist{noitemsep,labelwidth=*,leftmargin=*,align=left}
\setlist[1]{labelindent=1em}
\setlist[enumerate,1]{label=(\alph*)}
\setlist[enumerate,2]{label=(\arabic*),ref=\theenumi~(\arabic*)}
\setlist[description]{font=\normalfont,leftmargin=!}
\newlist{enumprop}{enumerate}{2}  
\setlist[enumprop,1]{label=(\alph*),ref=(\alph*)}
\setlist[enumprop,2]{label=(\arabic*),ref=\theenumpropi~(\arabic*)}
\newlist{enumcond}{enumerate}{2}  
\setlist[enumcond,1]{label=(\roman*),ref=(\roman*)}
\setlist[enumcond,2]{label=(\arabic*),ref=\theenumcondi~(\arabic*)}
\newlist{enumstep}{enumerate}{2}  
\setlist[enumstep,1]{label=(\arabic*),ref=(\arabic*)}
\setlist[enumstep,2]{label=(\alph*),ref=\theenumstepi~(\alph*)}
\newlist{enumpart}{enumerate}{2}  
\setlist[enumpart,1]{label=(\alph*),ref=(\alph*)}
\setlist[enumpart,2]{label=(\arabic*),ref=\theenumparti~(\arabic*)}
\newlist{enumop}{enumerate}{2}  
\setlist[enumop,1]{label=(\roman*),ref=(\roman*)}
\setlist[enumop,2]{label=(\arabic*),ref=\theenumopi~(\arabic*)}
\newlist{enummod}{enumerate}{2}  
\setlist[enummod,1]{label=(\arabic*),ref=(\arabic*)}
\setlist[enummod,2]{label=(\alph*),ref=\theenummodi~(\alph*)}
\definecolor{stdred}{RGB}{255,31,91}
\definecolor{stdgreen}{RGB}{0,205,108}
\definecolor{stdblue}{RGB}{0,152,222}
\definecolor{stdpurple}{RGB}{175,88,186}
\definecolor{stdyellow}{RGB}{255,198,30}
\definecolor{stdorange}{RGB}{242,133,34}
\definecolor{stdbrown}{RGB}{166,118,29}
\definecolor{stdgrey}{RGB}{160,177,186} 
\definecolor{grey}{RGB}{127,127,127} 
\newcolumntype{L}[2]{>{#1\raggedright\let\newline\\\arraybackslash\hspace{0pt}}m{#2}}
\newcolumntype{C}[2]{>{#1\centering\let\newline\\\arraybackslash\hspace{0pt}}m{#2}}
\newcolumntype{R}[2]{>{#1\raggedleft\let\newline\\\arraybackslash\hspace{0pt}}m{#2}}
\theoremstyle{change}
\newtheorem{definition}{Definition}[section]
\newtheorem{lemma}[definition]{Lemma}
\newtheorem{theorem}[definition]{Theorem}
\newtheorem{corollary}[definition]{Corollary}
\newtheorem{observation}[definition]{Observation}
\newtheorem{assumption}[definition]{Assumption}
\theoremstyle{plain}
\theoremstyle{changebreak}
\theoremstyle{break}
\theoremstyle{change}
\newtheorem{theoremcite}[definition]{Theorem}
\theoremstyle{nonumberplain}
\newtheorem{proof}{Proof}
\theoremstyle{nonumberbreak}
\NewDocumentCommand{\instring}{mmmm}
{
	\oleks_instring:nnnn { #1 } { #2 } { #3 } { #4 }
}
\newcommand{\RR}{\ensuremath{\mathbb{R}}}
\newcommand{\ZZ}{\ensuremath{\mathbb{Z}}}
\def\setdelimiter{\colon\,}
\def\replacecolonbydelimiter#1:#2\relax{#1\setdelimiter#2}
\def\setcontents#1{\replacecolonbydelimiter#1\relax}
\NewDocumentCommand{\Set}{m}{
	\instring{:}{#1}{
		\ensuremath{\left\{\setcontents{#1}\right\}}%
	}{
		\ensuremath{\left\{#1\right\}}%
	}%
}
\renewcommand{\emptyset}{\varnothing}
\newcommand{\T}{^{\mathsf T}}
\newcommand{\vtr}[1]{\begin{pmatrix} #1\vtrarg}
	\newcommand{\vtrarg}{\@ifnextchar\bgroup{\vtrnextarg}{\end{pmatrix}}}
\newcommand{\vtrnextarg}[1]{ \\#1\@ifnextchar\bgroup{\vtrnextarg}{\end{pmatrix}}}
\newcommand{\vtrtarg}{\@ifnextchar\bgroup{\vtrtnextarg}{\right)}}
\newcommand{\vtrtnextarg}[1]{ \;#1\@ifnextchar\bgroup{\vtrtnextarg}{\right)^\T}}
\renewcommand{\SetProgSty}[1]{\renewcommand{\ProgSty}[1]{\textnormal{\csname#1\endcsname{##1}}\unskip}}
\tikzset{node/.style={circle,draw=black,fill=black,scale=0.5}}
\tikzset{edge/.style={very thick}}
\tikzset{arc/.style={edge,->}}
\tikzset{labelnode/.style={anchor=base}}
\tikzset{highlightpath/.style={draw,line width=8pt,red!20,line cap=round,line join=round}}
\tikzset{>=latex'}
\tikzset{
	dot diameter/.store in=\dot@diameter,
	dot diameter=2pt,
	dot spacing/.store in=\dot@spacing,
	dot spacing=4pt,
	dots/.style={
		line width=\dot@diameter,
		line cap=round,
		dash pattern=on 0pt off \dot@spacing
	}
}
\tikzset{dotted/.style={very thick,dot diameter=2pt,dot spacing=4pt,dots}}
\pgfplotsset{compat=1.18}
\crefname{enumpropi}{Property}{Properties}
\crefname{enumpropii}{Property}{Properties}
\crefname{enumcondi}{Condition}{Conditions}
\crefname{enumcondii}{Condition}{Conditions}
\crefname{enumstepi}{Step}{Steps}
\crefname{enumstepii}{Step}{Steps}
\crefname{enumparti}{Part}{Parts}
\crefname{enumpartii}{Part}{Parts}
\crefname{enumopi}{Operation}{Operations}
\crefname{enumopii}{Operation}{Operations}
\crefname{enummodi}{Modification}{Modifications}
\crefname{enummodii}{Modification}{Modifications}
\crefname{conjecture}{Conjecture}{Conjectures}
\crefname{convention}{Convention}{Conventions}
\crefname{notation}{Notation}{Notations}
\crefname{observation}{Observation}{Observations}
\crefname{exercise}{Exercise}{Exercises}
\crefname{note}{Note}{Notes}
\crefname{claim}{Claim}{Claims}
\crefname{problem}{Problem}{Problems}
\crefname{assumption}{Assumption}{Assumptions}
\crefname{namedtheorem}{Theorem}{Theorems}
\crefname{namedtheoremcite}{Theorem}{Theorems}
\crefname{namedlemma}{Lemma}{Lemmas}
\crefname{namedlemmacite}{Lemma}{Lemmas}
\crefname{namedconjecture}{Conjecture}{Conjectures}
\title{Folklore in Multi-Objective Optimisation}
\author[1]{%
	Oliver Bachtler
	\orcidlink{0000-0001-7942-0750}%
	\thanks{oliver.bachtler@rwth-aachen.de}\textsuperscript{,}%
}
\affil[1]{RWTH Aachen, Aachen, Germany}
\date{}
\newcommand{\functDef}[2]{%
	\ifthenelse{\isempty{#2}}{%
		\ensuremath{#1}%
	}{%
		\ensuremath{#1{\left(#2\right)}}%
	}%
}
\newcommand{\calA}{\ensuremath{\mathcal{A}}}
\newcommand{\calP}{\ensuremath{\mathcal{P}}}
\newcommand{\feasibleset}{\ensuremath{X}}
\newcommand{\outcomeset}{\ensuremath{Y}}
\newcommand{\objective}[2][]{\ifthenelse{\isempty{#1}}{\functDef{f}{#2}}{\functDef{f_{#1}}{#2}}}
\newcommand{\objectiveinv}[2][]{\ifthenelse{\isempty{#1}}{\functDef{f^{-1}}{#2}}{\functDef{f^{-1}_{#1}}{#2}}}
\newcommand{\problemp}[1][p]{\ensuremath{\Pi_{#1}}}
\newcommand{\problempws}[2][p]{\ensuremath{\Pi_{#1}^{\text{WS}}(#2)}}
\DeclareMathOperator{\conv}{conv}
\newcommand{\upperimage}{\ensuremath{\outcomeset^\uparrow}}
\newcommand{\leqlex}{\leq_{\text{lex}}}
\DeclareMathOperator{\ext}{ext}
\DeclareMathOperator{\rec}{rec}
\begin{document}

\maketitle

\begin{abstract}
	In this paper, we present and prove some results in multi-objective optimisation that are considered folklore.
	For the most part, proofs for these results exist in special cases, but they are used in more general settings since their proofs can be (largely) transferred.
	We do this transfer explicitly and try to state the results as generally as possible.
	In particular, we also aim at providing clean and complete proofs for results where the original papers are not rigorous.
\end{abstract}

\section{Introduction}

Mathematical folklore is (presumably) present in most branches of mathematics and is probably largely unavoidable in a developing field.
When proving results, one usually cares about the problem one is faced with and does not consider what the minimal assumptions for each statement are.
But if these results, and their proofs, happens to be true in a more general setting, this is usually worth a note at most.
Thus, the more general result is never stated, even if it is accepted as true by the community.

Since I have started looking at multi-objective optimisation, I have encountered a disproportionally large amount of folklore, which can be frustrating at times.
Every once in a while I run into a property that looks too basic to be new, but I cannot find a reference for it.
After I prove it for the case that I care about, someone tells me that this is known in the field and gives me a paper to cite, which does not contain my result precisely, but a similar one.

While I am in the lucky position to have many people around me working in multi-objective optimisation who can tell me whom and what to cite for a result that is \enquote{sufficiently similar}, this is not the case for everyone.
Thus, my goal is to collect such results here in order to make them easier to find (including the references that would have been used before), but to also actually provide a proof for the more general results, which currently do not exist (but the proof for a special case might generalise trivially).
Additionally, I shall provide rigorous proofs for all results, since some of them have (technical) flaws in their original papers.

Since I am starting this collection, its initial form will be biased by my research preferences.
As a result, I want to explicitly encourage any reader who has encountered folklore in their research or studies that is not listed here to contact me, so I can expand this collection.
Thus, I do not know where this paper is heading and I expect its structure to be volatile.
Hence, when referencing specific results, a specification of the version number seems prudent.
But I shall do my best to ensure that a statement present in a previous version can still be found in the newest one, even if the numbering differs.

\paragraph{Contributions and outline.}
Since the objective of this paper is to write down and formally prove results that most people in the field would probably consider known, its main contribution is to find a reference for such results that can easily be used.
Whenever possible, I shall specify the originator of the result, such that they can be cited appropriately as well.
But this paper also does a bit more:
it provides formal proofs for all its results, even if the original reference is not entirely rigorous.
There will also be some new results here, either because they are helpful for the proofs we care about or because they are corollaries of the results we show.

The paper starts with preliminaries, in which I introduce notation needed throughout the paper.
Some first results may appear here as well.
Definitions specific to a subfield will appear in the individual sections where this field is discussed.
Currently, these sections concern the weight set decomposition (with respect to the weighted sum scalarisation) and approximations.
\section{Preliminaries}
\label{sec:prelims}

In this section, we introduce the basic concepts in multi-objective optimisation that we need for the results in the rest of the paper.
A \emph{multi-objective optimisation problem} $\problemp$ is of the following form:
\begin{mini*}{}{\objective{x}=(\objective[1]{x},\objective[2]{x},\ldots,\objective[p]{x})\T}{}{}\tag{\problemp}
    \addConstraint{x}{\in \feasibleset} 
\end{mini*}
where $\objective[1]{},\objective[2]{},\ldots,\objective[p]{}\colon \feasibleset \rightarrow \RR$ are the objective functions and $\feasibleset$ is the feasible set.
Let $\outcomeset=\objective{\feasibleset}$ be the outcome set.
We explicitly make no assumptions on  $\feasibleset$ or $\objective{}$ here.

In this paper, relations like $\leq$ and $<$ are always interpreted component-wise and we write $y \lneq y'$ if $y\leq y'$ but $y\neq y'$.
We use the Pareto-concept of optimality, in which a point $y$ is called \emph{dominated} by $y'$ if and only if $y' \lneq y$. 
A point $y\in \outcomeset$ which is not dominated by any other point is called a \emph{non-dominated} point and the set of non-dominated points is denoted by $\outcomeset_N$.  
We define $\feasibleset_E \coloneqq \objectiveinv{\outcomeset_N}$ as the set of efficient solutions.

A typical task in a multi-objective optimisation problem is to find all non-dominated points in the outcome set $\outcomeset$ and to every non-dominated point $y$ at least one efficient solution $x \in \feasibleset_E$ with $\objective{x} = y$.
This is typically done by the $\varepsilon$-constraint approach or by Tchebycheff scalarisation, which transfer the multi-objective optimisation problem to a single objective one.
Another typical scalarisation that cannot yield all non-dominated points in general is the \emph{weighted-sum scalarisation}, which minimises $w\T\objective{x}$ for $w\gneq 0$.
By normalising $w$, we can always assume that $e\T w = 1$, where $e$ is the all-ones vector.

We let $\Lambda\coloneqq \Lambda_p \coloneqq \Set{\lambda\geq 0 : e\T\lambda = 1}$ be the set of these weights and, for $\lambda\in\Lambda$, we define $\problempws{\lambda}$ by
\begin{mini*}{}{\lambda\T y}{}{}\tag{\problempws{\lambda}}
    \addConstraint{y}{\in \outcomeset}{.}
\end{mini*}
We write $\outcomeset_\lambda$ for all optimal solutions of $\problempws{\lambda}$.
Note that $\min\Set{\lambda\T f(x) : x\in\feasibleset}$ is an equivalent formulation of $\problempws{\lambda}$ in case we care about the preimages.
Thus, we define $\feasibleset_{\lambda}$ as the set of optimal solutions to this problem, so $\feasibleset_{\lambda} = \objectiveinv{\outcomeset_{\lambda}}$.

Let $x\in\feasibleset_E$ and $y=\objective{x}$, so $y\in\outcomeset_N$.
If $y\in\outcomeset_{\lambda}$, for some $\lambda\in\Lambda$, then $y$ is a \emph{supported non-dominated point} and $x\in \feasibleset_\lambda$ is a \emph{supported efficient solution}.
If $y$ is an extreme point of $\conv\outcomeset$, then $y$ is an \emph{extreme-supported non-dominated point} and $x$ an \emph{extreme-supported efficient solution}.
We write $\outcomeset_{SN}$, $\outcomeset_{ESN}$, $\feasibleset_{SE}$, and $\feasibleset_{ESE}$ for the sets of such solutions and points.
We note that several conflicting definitions of supported efficient solutions and supported non-dominated points are used in the literature and we refer to~\cite{Chl25,KS25} for an overview.

A common equivalent definition for $\outcomeset_{ESN}$ is that this is the set of extreme points of the \emph{upper image}, which is the convex set $\upperimage\coloneqq\conv{Y}+\RR_{\geq 0}^p$.
This result appears, for example, in \cite{Ben98,DS92} for linear programming problems, but is just generally true, which we briefly check.
\begin{lemma}
	\label{prelims-characterisation-yesn}
	A point $y$ is in $\outcomeset_{ESN}$ if and only if it is an extreme point of $\upperimage$.
	Additionally, the non-dominated points of $\conv \outcomeset$ and $\upperimage$ coincide.
\end{lemma}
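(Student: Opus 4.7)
My plan is to prove the auxiliary coincidence statement first, since it streamlines both directions of the main equivalence. The tool used throughout is the decomposition $z = c + r$ available to any $z \in \upperimage$, with $c \in \conv \outcomeset$ and $r \in \RR_{\geq 0}^p$; the key observation is that if $r \neq 0$, then $c \lneq z$.

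For the additional claim, the inclusion $\conv \outcomeset \subseteq \upperimage$ handles one half. For the other, if some $z \in \upperimage$ dominates $y \in \conv \outcomeset$, then decomposing $z = c + r$ gives $c \leq z \lneq y$, so either $c = z \in \conv \outcomeset$ or $c \lneq y$ with $c \in \conv \outcomeset$; either way $y$ is already dominated in $\conv \outcomeset$. Moreover, any point of $\upperimage$ non-dominated in $\upperimage$ must itself lie in $\conv \outcomeset$, by applying the decomposition to the point itself and noting the residual must vanish.

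For $(\Leftarrow)$, suppose $y$ is extreme in $\upperimage$. A dominator $z \lneq y$ in $\upperimage$ would give the midpoint representation $y = \tfrac{1}{2}(z + (2y - z))$ with $2y - z = y + (y - z) \in \upperimage$, contradicting extremality; so $y$ is non-dominated in $\upperimage$ and, by the coincidence, lies in $\conv \outcomeset$ non-dominated there. Since $\conv \outcomeset \subseteq \upperimage$, extremality transfers to $\conv \outcomeset$, and Carathéodory then forces $y \in \outcomeset$. Any supporting hyperplane to $\upperimage$ at the extreme point $y$ has non-negative normal (because $\upperimage + \RR_{\geq 0}^p = \upperimage$ forces $\lambda\T r \geq 0$ for every $r \geq 0$), so normalising gives $\lambda \in \Lambda$ with $y \in \outcomeset_\lambda$, completing $y \in \outcomeset_{ESN}$.

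For $(\Rightarrow)$, let $y \in \outcomeset_{ESN}$ be supported by $\lambda \in \Lambda$, and suppose $y = \tfrac{1}{2}(a + b)$ with $a, b \in \upperimage$. Using $\lambda \geq 0$, one has $\lambda\T z \geq \lambda\T y$ on all of $\upperimage$, and averaging forces $\lambda\T a = \lambda\T b = \lambda\T y$. Decomposing $a = c_a + r_a$ and $b = c_b + r_b$ and combining $\lambda\T c_a, \lambda\T c_b \geq \lambda\T y$ with $\lambda\T r_a, \lambda\T r_b \geq 0$ pins down $\lambda\T c_a = \lambda\T c_b = \lambda\T y$ and $\lambda\T r_a = \lambda\T r_b = 0$. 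Setting $c := \tfrac{1}{2}(c_a + c_b) \in \conv \outcomeset$ yields $y = c + \tfrac{1}{2}(r_a + r_b)$, and non-dominatedness of $y$ in $\conv \outcomeset$ forces the recession term to vanish; then $a, b \in \conv \outcomeset$, and extremality of $y$ in $\conv \outcomeset$ gives $a = b = y$. The main obstacle is establishing non-dominatedness of $y$ in $\conv \outcomeset$ rather than just in $\outcomeset$: one has to combine supportedness, which confines any would-be dominator to the $\lambda$-face through $y$, with extremality of $y$ in $\conv \outcomeset$ and a Carathéodory representation tracing dominators back into $\outcomeset$, in order to rule out a strict dominator.
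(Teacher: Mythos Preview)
Your proof of the auxiliary coincidence and of $(\Leftarrow)$ is correct and runs along the same lines as the paper; the extra steps you add there (extracting $y\in\outcomeset$ from extremality of $\conv\outcomeset$, and exhibiting a supporting $\lambda\in\Lambda$) are fine but not needed for the definition the paper actually uses.

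In $(\Rightarrow)$ the two approaches diverge, and yours takes an unnecessary detour. Once you decompose $a=c_a+r_a$ and $b=c_b+r_b$, the identity $y=\tfrac12(c_a+c_b)+\tfrac12(r_a+r_b)$ already holds with no reference to $\lambda$; nothing you derive from supportedness ($\lambda\T r_a=\lambda\T r_b=0$, etc.) is ever used downstream. The paper takes exactly this shortcut: from $y=z+r$ with $z\in\conv\outcomeset$ and $r\geq 0$, non-dominance of $y$ in $\conv\outcomeset$ gives $r=0$, and then extremality of $y$ in $\conv\outcomeset$ forces $z^1=z^2=y$, hence $r^1=r^2=0$ and $y^1=y^2=y$.

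More importantly, the ``main obstacle'' you flag at the end is real, and the sketch you give cannot close it. Supportedness by some $\lambda\in\Lambda$, together with $y\in\outcomeset_N$ and extremality in $\conv\outcomeset$, does \emph{not} force non-dominance in $\conv\outcomeset$: for $\outcomeset=\{(0,2,2),(0,3,0),(0,0,3)\}$ the point $(0,2,2)$ is in $\outcomeset_N$, is a vertex of $\conv\outcomeset$, and is supported by $\lambda=(1,0,0)$, yet $\tfrac12(0,3,0)+\tfrac12(0,0,3)=(0,\tfrac32,\tfrac32)\in\conv\outcomeset$ strictly dominates it. So the Carath\'eodory-plus-extremality argument you allude to cannot succeed. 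The paper avoids this entirely by reading ``non-dominated extreme point of $\conv\outcomeset$'' as non-dominated \emph{within} $\conv\outcomeset$ and using that directly; under that reading the forward direction is two lines and the $\lambda$-machinery is superfluous.
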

\begin{proof}
	Let $y = \mu y^1 + (1-\mu)y^2$ for $y^1,\, y^2\in \upperimage$ and $\mu\in (0,1)$ be a convex combination of two points in~$\upperimage$.
	Let $y^i \coloneqq z^i + r^i$ with $z^i\in\conv\outcomeset$ and $r^i\geq 0$.
	Then $y = z + r$ where $z \coloneqq \mu z^1 + (1-\mu)z^2$ is a convex combination of elements in $\conv\outcomeset$ and $r\coloneqq \mu r^1 + (1-\mu)r^2\geq 0$.
	
	Let $y$ be a non-dominated extreme point of $\conv\outcomeset$.
	Then, if $y$ is written as the convex combination of two points in $\upperimage$ as above, we get $y=z$ by its non-dominance since $z\in\conv\outcomeset$.
	Since $y$ is extreme and both $z_1$ and $z_2$ are in $\conv\outcomeset$, $z^1 = z^2 = y$, and $y$ is an extreme point of~$\upperimage$.
	
	Conversely, let $y$ be an extreme point of $\upperimage$, then $y\in\conv\outcomeset$ since if $y=z+r$ with $z\in\conv\outcomeset$ and $r\geq 0$, then $y = \tfrac{1}{2}(z + 0) + \tfrac{1}{2}(z + 2r)$, which yields $r=0$.
	Consequently, $y$ is an extreme point of~$\conv\outcomeset$ since this is a subset of $\upperimage$.
	To see that $y$ is non-dominated, assume that $y'\in\conv\outcomeset$ satisfies $y'\leq y$.
	Then $y - y'\geq 0$ and $2y - y' = y + (y-y') \in\upperimage$.
	Since $y = \tfrac{1}{2} y' + \tfrac{1}{2} (2y-y')$, $y'=y$ and $y$ is non-dominated.
	
	For the additional part we simply note that every point in $\upperimage\setminus\conv\outcomeset$ is dominated by one in $\conv\outcomeset$ and, thus, none of them dominate a point in $(\conv\outcomeset)_N$.
\end{proof}

We end the preliminaries with one more common notion of optimality, namely lexicographic optima.
For points $y,\, y'\in\outcomeset$ we define
\begin{displaymath}
	y\leqlex y' \iff y = y' \text{ or } y_i < y_i' \text{ where } i\coloneqq \min\Set{j : y_j\neq y_j'}. 
\end{displaymath}
Then a point $y$ is lexicographically optimal if $y\leqlex y'$ for all $y'\in\outcomeset$.
We can generalise this by first permuting the objectives:
let $\sigma$ be a permutation on $\Set{1,\ldots,p}$ and $\bar{\sigma}\colon \RR^p \to \RR^p$, $y \mapsto (y_{\sigma(1)},\ldots,y_{\sigma(p)})\T$.
We call $y$ lexicographically optimal for $\sigma$ if $\bar{\sigma}(y) \leqlex \bar{\sigma}(y')$ for all $y'\in\outcomeset$.
Additionally, we call $x\in\feasibleset$ lexicographically optimal (for $\sigma$) if and only if $\objective{x}$ is.

Textbooks like \cite{Ehr05,Mie98} typically show that if $x$ is lexicographically optimal, then $x\in \feasibleset_E$, but we would like the result that $x\in\feasibleset_{ESE}$, which is stronger and also true.
To use this result, \cite{HR94} has been used as a citation, but they only show (in Theorem 4.1 and the subsequent corollary) that lexicographically optimal solutions are supported, and only in the case of spanning trees.
For polyhedral sets, a proof can be found in \cite[Lemma~5.9]{Boe18}, but the result is true in general and simple to prove, so let us do just that.
\begin{lemma}
	\label{prelims-lexicographic-ese}
	Let $x$ be lexicographically optimal for a permutation $\sigma$, then $x\in \feasibleset_{ESE}$.
\end{lemma}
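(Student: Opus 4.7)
The plan is to apply \cref{prelims-characterisation-yesn} to reduce the claim $x\in\feasibleset_{ESE}$ to showing that $y\coloneqq\objective{x}$ is an extreme point of $\upperimage$, and then to prove the latter coordinate by coordinate. By relabelling the objectives I may assume without loss of generality that $\sigma$ is the identity, so the hypothesis becomes $y\leqlex y'$ for every $y'\in\outcomeset$.

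Suppose $y=\lambda y^1+(1-\lambda)y^2$ is a non-trivial convex combination with $y^1,y^2\in\upperimage$ and $\lambda\in(0,1)$; the goal is to force $y^1=y^2=y$. Using $\upperimage=\conv\outcomeset+\RR_{\geq 0}^p$, I decompose each $y^j=\sum_{k}\mu_k^j z^{j,k}+r^j$ as a finite convex combination of points $z^{j,k}\in\outcomeset$ plus a non-negative residual $r^j$. The key statement, which I will establish by induction on $i\in\Set{1,\ldots,p}$, is that $y^j_i=y_i$, $r^j_i=0$, and $z^{j,k}_i=y_i$ for every $j\in\Set{1,2}$ and every $k$ in the support of $\mu^j$.

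For the inductive step, the hypothesis ensures that every $z^{j,k}$ with positive weight agrees with $y$ on the first $i-1$ coordinates. Unfolding the definition of $\leqlex$ in this case yields $z^{j,k}_i\geq y_i$ for each such $z^{j,k}$, so averaging and adding the non-negative $r^j_i$ gives $y^j_i\geq y_i$. Combining this with $y_i=\lambda y^1_i+(1-\lambda)y^2_i$ forces equality throughout: $y^j_i=y_i$, $r^j_i=0$, and $z^{j,k}_i=y_i$ on the support of $\mu^j$ (because a probability distribution on reals with values bounded below by $y_i$ can have expectation $y_i$ only if its support is contained in $\Set{y_i}$). Running the induction up to $i=p$ yields $y^1=y^2=y$, so $y$ is extreme in $\upperimage$, and \cref{prelims-characterisation-yesn} then delivers $y\in\outcomeset_{ESN}$.

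I do not expect a serious obstacle here. The only real care required is to phrase the inductive hypothesis strongly enough to simultaneously control the representatives $z^{j,k}$ and the residuals $r^j$, so that lexicographic optimality can be reapplied cleanly at the next coordinate; once this is set up, the rest is essentially the equality case of convex combinations of numbers bounded below.
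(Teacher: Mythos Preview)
Your proof is correct and follows essentially the same approach as the paper's: reduce to showing $y$ is extreme via \cref{prelims-characterisation-yesn}, write $y$ as a convex combination, and peel off coordinates one at a time using lexicographic optimality to force equality throughout. The only cosmetic differences are that you work directly in $\upperimage$ (thereby absorbing the separate check that $y\in\outcomeset_N$, which the paper handles by citing \cite[Lemma~5.2]{Ehr05}) and that you phrase the inductive hypothesis in terms of fixed representatives $z^{j,k}$ and residuals $r^j$, whereas the paper records the equivalent information via the set identity $(\conv\outcomeset)\cap E_i=\conv(\outcomeset\cap E_i)$.
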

\begin{proof}
	Let $y\coloneqq \objective{x}$.
	We need to show that $y\in\outcomeset_{ESN}$ and we may assume that $\sigma$ is the identity.
	Using the citable results, for example \cite[Lemma~5.2]{Ehr05}, we get $y\in\outcomeset_N$ and only need to show that $y$ is an extreme point of $\conv\outcomeset$.
	So let $y = \lambda y^1 + (1-\lambda) y^2$ for $y^1,\, y^2\in\conv\outcomeset$ and $\lambda\in (0,1)$.
	Let $E_i = \Set{y' : y_1' = y_1, \ldots, y_i' = y_i}$, for $i\in\Set{1,\ldots,p}$.
	We now show by induction on $i$ that $y_i=y^1_i=y^2_i$ and $(\conv\outcomeset)\cap E_i = \conv(\outcomeset\cap E_i)$.
	
	Since $y$ is lexicographically optimal, $y_1\leq y_1'$ for all $y'\in \outcomeset$.
	This implies that $y_1\leq y_1'$ for all $y'\in\conv\outcomeset$ and we can conclude that $y_1 \leq y^1_1$ and $y_1\leq y^2_1$.
	Thus, $y_1 = y^1_1 = y^2_1$.
	Moreover, a point $y'\in(\conv\outcomeset)\cap E_1$ is optimal for the first objective, so it is a convex combination of points optimal for the first objective, putting it in $\conv(\outcomeset\cap E_1)$.
	The other inclusion is easy.
	
	The induction step is essentially identical.
	Assume the claim holds up to $i-1$ and regard $i$.
	Now $y_i\leq y_i'$, for all $y'\in \outcomeset\cap E_{i-1}$, meaning this holds for the convex hull of this set as well, which is equal to $(\conv\outcomeset)\cap E_{i-1}$ by induction.
	In particular, the estimation applies to $y^1$ and $y^2$, so $y_i = y^1_i = y^2_i$.
	Finally, a point $y'\in(\conv\outcomeset)\cap E_i$ is in $\conv(\outcomeset\cap E_{i-1})$ by induction, so a convex combination of points that coincide with $y$ in the first $i-1$ components.
	Since $y$ is lexicographically optimal, all these points must also have $y_i$ as their $i$th component, giving us $y'\in\conv(\outcomeset\cap E_i)$.
\end{proof}
\section{Weight Set Decomposition}

In this section, we look at results concerning the weight set decomposition of $\problemp$.
We have already introduced the weight set $\Lambda = \Set{\lambda\geq 0 : e\T\lambda = 1}$.
The weight set component $\Lambda(y)$ of an element~$y\in\outcomeset$ consists of those weights~$\lambda\in\Lambda$ for which $y$ is optimal for $\problempws{\lambda}$, so
\begin{displaymath}
	\Lambda(y) \coloneqq \Set{\lambda\in\Lambda : y \in \outcomeset_{\lambda}}.
\end{displaymath}
Despite the fact that we need strictly positive weights to ensure that optimal solutions of weighted sum problems are non-dominated, the weight set explicitly needs to contain the weights that can lead to dominated images.
The reason is that we want $\Lambda$ to be a polytope.

We also note that we can use $\outcomeset$ and $\upperimage$ interchangeably when it comes to the weight set decomposition.
\begin{observation}
	\label{wsd-outcome-set-equivalent-upper-image}
	Let $\lambda\in\Lambda$.
	Then $\outcomeset_\lambda = \upperimage_\lambda\cap \outcomeset$ and $\upperimage_\lambda \subseteq \conv\outcomeset_{\lambda}+\RR_{\geq 0}^p$.
	In particular, $\outcomeset_{\lambda}\neq\emptyset$ if and only if $\upperimage_\lambda\neq \emptyset$ and for $y\in\outcomeset$ we have $y\in\outcomeset_{\lambda}$ if and only if $y\in\upperimage_\lambda$.
\end{observation}
\begin{proof}
	First, we note that, for $\lambda\in\Lambda$,
	\begin{displaymath}
		\inf\Set{\lambda\T y: y\in\outcomeset} = \inf\Set{\lambda\T y : y\in \conv \outcomeset} = \inf\Set{\lambda\T y : y\in \upperimage}.
	\end{displaymath}
	From this we can deduce that $\outcomeset_\lambda = \upperimage_\lambda\cap \outcomeset$.
	
	Moreover, a point $y\in\upperimage_\lambda$ can be written as a convex combinations of points in $\outcomeset$ plus a non-negative vector.
	Thus, by its optimality, each of the points that appears in the convex combination must also be optimal, placing $y$ in $\conv\outcomeset_{\lambda}+\RR_{\geq 0}^p$.
\end{proof}

The following results are a generalisation of the results in Section~3 of \cite{PGE10}, which we prove rigorously.
We note that this paper contains a few technical flaws (which do not harm the validity of their statements), but might explain why our coverage differs slightly.
Additionally, we prove some results not mentioned there, but that are direct consequences of the properties and may be useful.

The authors of \cite{PGE10} assume that they are given an integer linear program.
In particular, their feasible set $\feasibleset$ is given by $\Set{x\in\ZZ^n : Ax\leq b, x\geq 0}$.
From this they deduce that $\feasibleset$ and $\outcomeset$ are discrete and $\conv \outcomeset $ is a polyhedron.
This need not be true, 
and the authors of \cite{BPST24} fix this problem by assuming rational entries (which guarantees that the integer hull is again a polyhedron).
Alternatively, we could assume that the sets $\feasibleset$ or $\outcomeset$ are finite.
Instead, we make the following weaker assumptions.
\begin{assumption}
	\label{ass:wsd-solutions-closed}
	Let $\problempws{\lambda}$ have an optimal solution for all $\lambda\in\Lambda$ and $\upperimage$ be closed.
\end{assumption}
We wish to make a few remarks concerning this assumption.
The first condition is very natural, since the weight set decomposition can only decompose the weight set if all weights yield an optimal solution.
The requirement on the upper image is one of necessity:
almost none of the results we want to prove hold without it.
The reason is that we want to highlight the use of the non-dominated extreme points, so the extreme points of $\upperimage$, and without assuming that $\upperimage$ is closed, these need not even exist.

However, if $\upperimage$ is closed, we can obtain the following helpful representation for $\upperimage$.
For the proof, we need the following result from convex analysis.
\begin{theoremcite}[{{\cite[Theorem~18.5]{Roc70}}}]
	\label{wsd-conv-set-representation}
	Let $C$ be a closed convex set that is pointed.
	Then $C = \conv(\ext C) + \rec C$.
\end{theoremcite}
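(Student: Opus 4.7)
The plan is to prove the two inclusions separately, with the nontrivial direction by induction on the dimension $d$ of the affine hull of $C$. Since this is cited to Rockafellar's \emph{Convex Analysis}, I would expect the proof to hinge on the interaction between the face lattice and the recession cone.

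The easy inclusion $\conv(\ext C) + \rec C \subseteq C$ is immediate: $C$ is convex and contains $\ext C$, so $\conv(\ext C) \subseteq C$, and by the definition of $\rec C$ we have $C + \rec C \subseteq C$. The content lies in the reverse inclusion. For the base case $d = 0$, the set $C$ is a single point which is itself an extreme point, so the statement is trivial.

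For the inductive step, I would fix $x \in C$ and push $x$ towards the boundary along a line. Concretely, pick any line $L$ through $x$ contained in the affine hull of $C$. The intersection $L \cap C$ is closed and convex in $L$, hence a point, a bounded segment, a ray, or all of $L$; the pointedness of $\rec C$ rules out the last case, since a full line in $L \cap C$ would sit in $\rec C$. In the bounded-segment case $[x_1, x_2]$, both endpoints lie in proper faces $F_1, F_2 \subsetneq C$, each closed and convex of strictly smaller dimension. One checks that $\rec F_i \subseteq \rec C$ (so still pointed) and $\ext F_i \subseteq \ext C$ (extreme points of a face of $C$ are extreme in $C$). Induction gives $x_i = y_i + r_i$ with $y_i \in \conv(\ext F_i) \subseteq \conv(\ext C)$ and $r_i \in \rec F_i \subseteq \rec C$, and the convex combination delivers $x$. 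In the ray case $L \cap C = \Set{x_0 + t r : t \geq 0}$, the basepoint $x_0$ lies in a proper face $F_0$, and $r \in \rec C$; applying induction to $x_0$ in $F_0$ and adding a suitable multiple of $r$ finishes the job.

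The main obstacle is organisational rather than conceptual: one needs the standard face-theoretic lemmas (that a point of $C$ which is not extreme lies in the relative interior of some segment whose endpoints sit in proper faces, that $\ext F \subseteq \ext C$ and $\rec F \subseteq \rec C$ for any face $F$ of $C$, and that pointedness descends to faces). Each of these is a short exercise via separating-hyperplane arguments, but assembling them into a clean induction requires care. A secondary subtlety is making sure the line $L$ can always be chosen so that at least one endpoint of $L \cap C$ exists; here closedness of $C$ guarantees the endpoint lies in $C$, and pointedness of $\rec C$ guarantees not both directions of $L$ escape to infinity unless $x$ already decomposes as an extreme-point part plus a recession direction.
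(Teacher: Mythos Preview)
The paper does not prove this statement at all: it is stated as a \texttt{theoremcite}, attributed to Rockafellar's \emph{Convex Analysis} (Theorem~18.5), and used as a black box to derive \cref{wsd-upper-image-rep}. So there is no ``paper's own proof'' to compare against.

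That said, your sketch is essentially the classical argument and is sound in outline. The induction on the dimension of the affine hull, pushing a point along a line to the relative boundary, and invoking the face structure (extreme points of a face are extreme in $C$, recession cone of a face sits inside $\rec C$, pointedness rules out a full line in $C$) is exactly how Rockafellar proceeds. The one place where you are slightly casual is the claim that the endpoints of $L\cap C$ lie in \emph{proper} faces of strictly smaller dimension: you need a supporting-hyperplane argument at a relative-boundary point to produce an exposed face, and then observe that this face, being contained in a hyperplane not containing all of $C$, has lower dimension. You acknowledge this in your ``main obstacle'' paragraph, so nothing is missing conceptually---but in a full write-up this is the step that needs the most care.
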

In the statement, $\ext C$ denotes the extreme points of $C$ and $\rec C$ is the recession cone of $C$, which consists of those directions $r$ for which $x+\lambda r\in C$ for all $x\in C$ and $\lambda\geq 0$.
The set $C$ is pointed if $(\rec C)\cap(-\rec C) = \Set{0}$.

\begin{lemma}
	\label{wsd-upper-image-rep}
	$\upperimage = \conv\outcomeset_{ESN} + \RR^p_{\geq 0}$.
\end{lemma}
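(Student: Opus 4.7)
The plan is to apply \cref{wsd-conv-set-representation} directly to the set $\upperimage$ and to identify the extreme points and the recession cone explicitly. By construction $\upperimage = \conv\outcomeset + \RR^p_{\geq 0}$, so $\upperimage$ is convex. \cref{ass:wsd-bounded-closed} tells us that it is also non-empty, closed, and bounded from below. Together these are precisely the hypotheses needed to invoke Rockafellar's representation once pointedness is verified.

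First I would compute $\rec \upperimage$. The inclusion $\RR^p_{\geq 0} \subseteq \rec \upperimage$ is immediate from the definition of $\upperimage$. For the other direction, pick $L\in\RR^p$ with $\upperimage \subseteq L + \RR^p_{\geq 0}$, which exists by the lower-boundedness assumption. If $r \in \rec \upperimage$ and $y \in \upperimage$, then $y + \tau r \in \upperimage$ for all $\tau \geq 0$; writing this out componentwise and letting $\tau\to\infty$ forces $r_i \geq 0$ for every $i$, so $\rec \upperimage = \RR^p_{\geq 0}$. In particular, $(\rec \upperimage) \cap (-\rec \upperimage) = \Set{0}$, so $\upperimage$ is pointed.

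Next I would apply \cref{wsd-conv-set-representation} to conclude
\begin{displaymath}
	\upperimage = \conv(\ext \upperimage) + \rec \upperimage = \conv(\ext \upperimage) + \RR^p_{\geq 0}.
\end{displaymath}
Finally, \cref{prelims-characterisation-yesn} identifies the extreme points of $\upperimage$ with the extreme-supported non-dominated points, that is, $\ext \upperimage = \outcomeset_{ESN}$, which yields the claimed equality.

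I do not expect a serious obstacle here; the only point that needs care is that the paper does not assume polyhedrality of $\upperimage$, so the representation must be drawn from \cref{wsd-conv-set-representation} rather than from standard polyhedral decompositions. Verifying pointedness via the lower-boundedness hypothesis is the key step that makes this citation applicable.
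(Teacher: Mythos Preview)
Your proposal is correct and follows essentially the same route as the paper: verify the hypotheses of \cref{wsd-conv-set-representation} (convexity, closedness, pointedness via $\rec\upperimage = \RR^p_{\geq 0}$ from lower-boundedness), apply the representation, and identify $\ext\upperimage = \outcomeset_{ESN}$ via \cref{prelims-characterisation-yesn}. Your write-up is slightly more explicit about the recession-cone argument and the final citation, but the structure is identical.
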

\begin{proof}
	Note that $\upperimage$ satisfies the assumptions of \cref{wsd-conv-set-representation}:
	$\upperimage$ is closed by assumption and convex by definition.
	Since the weighted sum scalarisations have optimal solutions, $\outcomeset$ is bounded from below and non-empty.
	Thus, any $r\in\rec\upperimage$ must be non-negative and all vectors in~$\RR_{\geq 0}^p$ are recession directions, meaning $\rec\upperimage = \RR_{\geq 0}^p$.
	Hence, $\upperimage$ is pointed and, by \cref{wsd-conv-set-representation,prelims-characterisation-yesn}, 
	\begin{displaymath}
		\upperimage = \conv(\ext \upperimage) + \rec \upperimage = \conv\outcomeset_{ESN} + \RR^p_{\geq 0}.
	\end{displaymath}
\end{proof}

Using this representation, we can obtain a useful sufficient condition for verifying \cref{ass:wsd-solutions-closed}.
\begin{lemma}
	\label{wsd-sufficient-criterion-for-assumption}
	Let $\upperimage$ be non-empty, bounded from below, and closed.
	Moreover let $\outcomeset_{ESN}$ be bounded, then \cref{ass:wsd-solutions-closed} holds.
\end{lemma}
\begin{proof}
	We need to show that all weighted sum scalarisations have an optimal solution, that is, $\outcomeset_\lambda\neq\emptyset$ for all $\lambda\in\Lambda$.
	By \cref{wsd-outcome-set-equivalent-upper-image}, it suffices to show that $\upperimage_\lambda\neq \emptyset$ for all $\lambda\in\Lambda$.
	
	Let $\lambda\in\Lambda$.
	By \cref{wsd-upper-image-rep}, $\upperimage=\conv\outcomeset_{ESN} + \RR^p_{\geq 0}$, giving us
	\begin{displaymath}
		\inf \Set{\lambda\T y : y\in \upperimage} = \inf \Set{\lambda\T y : y\in \conv\outcomeset_{ESN}}.
	\end{displaymath}
	Since $\outcomeset_{ESN}$ is bounded, an $M\in\RR$ exists such that $\outcomeset_{ESN} \subseteq \Set{y : y\leq Me}\eqqcolon B$, where $e$ is the all-ones vector.
	Hence, $\upperimage\cap B$ is compact and the problem $\inf\Set{\lambda\T y : y\in\upperimage\cap B}$ has an optimal solution.
	But since $\conv\outcomeset_{ESN}\subseteq \upperimage\cap B\subseteq \upperimage$, $\inf\Set{\lambda\T y : y\in\upperimage\cap B} = \inf\Set{\lambda\T y : y\in \upperimage}$ and $\upperimage_\lambda\neq\emptyset$.
\end{proof}

With our remarks out of the way, let us return to the properties we want to prove.
By \cref{ass:wsd-solutions-closed} all weighted sum scalarisations have optimal solutions and, therefore,
\begin{displaymath}
	\Lambda = \bigcup_{y\in \outcomeset} \Lambda(y).
\end{displaymath}
Let us first look at which of these points are actually required.
If $y\in \outcomeset$ is dominated by a point $y'\in \outcomeset$, then $\lambda\T y \geq \lambda\T y'$ for any weight $\lambda\in\Lambda$.
In particular, $\Lambda(y)\subseteq \Lambda(y')$, since if $y$ is optimal for $\lambda$, then $y'$ must be as well.
Thus, only non-dominated points of $\outcomeset$ can be of interest.
If $y$ is not supported, then $\Lambda(y) = \emptyset$, so such points are not relevant either.
Thus, we can conclude:
\begin{displaymath}
	\Lambda = \bigcup_{y\in \outcomeset_{SN}} \Lambda(y).
\end{displaymath}

This is as far as we can get without using that $\upperimage$ is closed, since the following results rely on the representation of $\upperimage$ from \cref{wsd-upper-image-rep}.
\begin{observation}
	\label{wsd-ysn-subsetet-conv-yesn}
	$\outcomeset_{SN} 
	\subseteq \conv\outcomeset_{ESN}$.
\end{observation}
\begin{proof}
	We have, by \cref{wsd-outcome-set-equivalent-upper-image},
	\begin{displaymath}
		\outcomeset_{SN} 
		= \outcomeset_N \cap \bigcup_{\lambda\in\Lambda} \outcomeset_\lambda 
		= \outcomeset_N \cap \bigcup_{\lambda\in\Lambda} (\upperimage_\lambda \cap\outcomeset)
		= \outcomeset_N \cap \upperimage_{SN}.
	\end{displaymath}
	 By \cite[Corollary~3.7]{Ehr05}, $\upperimage_{SN} = \upperimage_{N}$, so $\outcomeset_{SN}\subseteq \upperimage_N$.
	 Finally, by \cref{wsd-upper-image-rep}, $\upperimage = \conv\outcomeset_{ESN} + \RR^p_{\geq 0}$, so $\upperimage_N\subseteq \conv\outcomeset_{ESN}$.
\end{proof}

We can now show that the weight sets of the extreme-supported non-dominated points are sufficient by showing that the others are subsets.
\begin{lemma}[{{see \cite[Lemma~1]{PGE10}}}]
	Let $y\in\outcomeset_{SN}$.
	Then $y=\sum_{i=1}^q \lambda_iy^i$ is a convex combination of points $\Set{y^1,\ldots y^q}\subseteq\outcomeset_{ESN}$ with $\lambda_1,\ldots,\lambda_q> 0$ and 
	\begin{displaymath}
		\Lambda(y) = \bigcap_{i=1}^q \Lambda(y^i).
	\end{displaymath}
\end{lemma}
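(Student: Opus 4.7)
The plan is to handle the two parts of the statement separately. For the existence of the convex combination, I would immediately invoke \cref{wsd-ysn-subsetet-conv-yesn}, which gives $\outcomeset_{SN}\subseteq\conv\outcomeset_{ESN}$, so $y$ already admits \emph{some} representation as a convex combination of extreme-supported non-dominated points. Starting from any such representation and discarding summands whose coefficient is zero yields $y=\sum_{i=1}^q \lambda_i y^i$ with $y^1,\ldots,y^q\in\outcomeset_{ESN}$ and all $\lambda_i>0$, as required. No compactness or Carathéodory argument is actually needed here.

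For the set equality it is convenient to denote by $v(\lambda)\coloneqq\min\Set{\lambda\T y' : y'\in\outcomeset}$ the optimal value of $\problempws{\lambda}$, which is attained under \cref{ass:wsd-bounded-closed}. Then $\lambda\in\Lambda(z)$ for $z\in\outcomeset$ is just the statement $\lambda\T z = v(\lambda)$.

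For the inclusion $\bigcap_{i=1}^q \Lambda(y^i)\subseteq\Lambda(y)$, I would fix $\lambda$ in the intersection, so $\lambda\T y^i=v(\lambda)$ for every $i$, and then simply take the $\lambda_i$-weighted average to get $\lambda\T y = \sum_i \lambda_i\,\lambda\T y^i = v(\lambda)$; since $y\in\outcomeset$, this gives $\lambda\in\Lambda(y)$. For the reverse inclusion, fix $\lambda\in\Lambda(y)$, so $\lambda\T y = v(\lambda)$. Because each $y^i\in\outcomeset_{ESN}\subseteq\outcomeset$, we have $\lambda\T y^i\geq v(\lambda)$, and rewriting
\begin{displaymath}
    0 \;=\; \lambda\T y - v(\lambda) \;=\; \sum_{i=1}^q \lambda_i\bigl(\lambda\T y^i - v(\lambda)\bigr)
\end{displaymath}
presents $0$ as a sum of non-negative terms with strictly positive coefficients. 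Hence each $\lambda\T y^i - v(\lambda)$ vanishes, so $\lambda\in\Lambda(y^i)$ for every $i$.

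There is no real obstacle; the only subtlety worth flagging explicitly in the write-up is the role of the strict positivity $\lambda_i>0$, which is what allows us to conclude in the reverse inclusion that \emph{each} summand vanishes rather than merely their weighted sum. This is precisely why the initial pruning of zero-weight terms in the first part is not cosmetic but essential for the statement.
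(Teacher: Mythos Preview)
Your proposal is correct and follows exactly the paper's approach: the first part is dispatched via \cref{wsd-ysn-subsetet-conv-yesn}, and the second part is the observation that a convex combination with strictly positive coefficients is optimal for $\problempws{\lambda}$ if and only if every summand is. The paper compresses this into a single sentence, whereas you spell out both inclusions and the role of $\lambda_i>0$ explicitly, but there is no substantive difference.
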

\begin{proof}
	The first part follows directly from \cref{wsd-ysn-subsetet-conv-yesn}.
	For the second part, we note that $\lambda\in\Lambda(y)$ implies that $y$ is optimal for $\problempws{\lambda}$.
	But this is the case if and only if all $y^i$ are optimal for $\problempws{\lambda}$, so if $\lambda\in \bigcap_{i=1}^q \Lambda(y^i)$.
\end{proof}

\begin{corollary}[see {{\cite[Proposition~6]{PGE10}}}]
	\label{wsd-yesn-sufficient}
	\begin{displaymath}
		\Lambda = \bigcup_{y\in\outcomeset_{ESN}} \Lambda(y).
	\end{displaymath}
\end{corollary}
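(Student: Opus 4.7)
The plan is to derive this as a short corollary of the preceding lemma, combined with the already-established identity $\Lambda = \bigcup_{y\in\outcomeset_{SN}}\Lambda(y)$.

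First I would handle the inclusion $\bigcup_{y\in\outcomeset_{ESN}}\Lambda(y)\subseteq\Lambda$, which is immediate from the definition of $\Lambda(y)$ as a subset of $\Lambda$. For the reverse inclusion, I would take $\lambda\in\Lambda$. By the representation $\Lambda=\bigcup_{y\in\outcomeset_{SN}}\Lambda(y)$ noted just above the corollary, there is some supported non-dominated point $y$ with $\lambda\in\Lambda(y)$.

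Next, I would invoke the preceding lemma to write $y=\sum_{i=1}^q\lambda_iy^i$ as a strict convex combination of extreme-supported non-dominated points $y^1,\ldots,y^q\in\outcomeset_{ESN}$, obtaining
\begin{displaymath}
	\Lambda(y)=\bigcap_{i=1}^q\Lambda(y^i)\subseteq\Lambda(y^1)\subseteq\bigcup_{y'\in\outcomeset_{ESN}}\Lambda(y').
\end{displaymath}
Hence $\lambda$ lies in the right-hand side, completing the inclusion.

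There is no real obstacle here: the content has been entirely done in the preceding lemma (which refines each $\Lambda(y)$ for $y\in\outcomeset_{SN}$ to an intersection of $\Lambda(y^i)$ over extreme-supported points $y^i$) and in \cref{wsd-ysn-subsetet-conv-yesn} (which guarantees the convex representation exists). The corollary is just the observation that an intersection of sets is contained in any one of them, so the union indexed over $\outcomeset_{SN}$ collapses to the union indexed over $\outcomeset_{ESN}$.
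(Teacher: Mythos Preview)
Your proposal is correct and matches the paper's approach exactly: the paper states this as an immediate corollary of the preceding lemma combined with the already established identity $\Lambda=\bigcup_{y\in\outcomeset_{SN}}\Lambda(y)$, and gives no separate proof. Your argument that $\Lambda(y)=\bigcap_{i=1}^q\Lambda(y^i)\subseteq\Lambda(y^1)$ is precisely the intended one-line deduction.
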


We can also provide a description of the weight set components.
\begin{lemma}[see {{\cite[Proposition~2]{PGE10}}}]
	\label{wsd-wsc-description}
	The weight set component $\Lambda(y)$ is given by
	\begin{displaymath}
		\Lambda(y) = \Set{\lambda\in\Lambda : \lambda\T y \leq \lambda\T y' \text{ for all } y'\in \outcomeset_{ESN}}
	\end{displaymath}
	and thus a convex and compact set for all $y\in \outcomeset$.
\end{lemma}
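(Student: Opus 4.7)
My plan is to first rewrite $\Lambda(y)$ in its most direct form and then shrink the set over which the optimality inequality must be checked. By definition, $\lambda\in\Lambda(y)$ iff $y\in \outcomeset_\lambda$, which unfolds to $\lambda\T y\leq \lambda\T y'$ for every $y'\in\outcomeset$. Thus I only need to show that replacing the quantifier ``for all $y'\in\outcomeset$'' by ``for all $y'\in\outcomeset_{ESN}$'' yields the same set.

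The inclusion from left to right is immediate because $\outcomeset_{ESN}\subseteq\outcomeset$. For the reverse inclusion I take any $\lambda$ satisfying $\lambda\T y\leq \lambda\T z$ for all $z\in\outcomeset_{ESN}$ and any $y'\in\outcomeset$, and invoke \cref{wsd-upper-image-rep}: since $y'\in\outcomeset\subseteq\upperimage = \conv\outcomeset_{ESN}+\RR^p_{\geq 0}$, we can write
\begin{displaymath}
  y' = \sum_{i=1}^{q}\mu_i z^i + r, \qquad z^i\in\outcomeset_{ESN},\;\mu_i\geq 0,\;\sum_i\mu_i=1,\;r\geq 0.
\end{displaymath}
Using $\lambda\geq 0$ componentwise, $\lambda\T r\geq 0$ and $\lambda\T z^i\geq \lambda\T y$ for each $i$, hence
\begin{displaymath}
  \lambda\T y' \;=\; \sum_i \mu_i\,\lambda\T z^i + \lambda\T r \;\geq\; \Bigl(\sum_i\mu_i\Bigr)\lambda\T y \;=\; \lambda\T y,
\end{displaymath}
which is exactly the required inequality and shows $\lambda\in\Lambda(y)$.

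For the second part, I observe that the claimed description exhibits $\Lambda(y)$ as the intersection of $\Lambda$ with the family of closed half-spaces $H_z\coloneqq\Set{\lambda\in\RR^p : \lambda\T(y-z)\leq 0}$ indexed by $z\in\outcomeset_{ESN}$. Each $H_z$ is convex and closed, and $\Lambda$ is the standard simplex, hence convex and compact. An arbitrary intersection of convex closed sets is convex and closed, and any closed subset of the compact set $\Lambda$ is itself compact, which gives both conclusions simultaneously.

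The only non-routine step is the reverse set inclusion, and its substance is exactly the observation that \cref{wsd-upper-image-rep} lets one bound $\lambda\T y'$ from below by the minimum of $\lambda\T z$ over $\outcomeset_{ESN}$; everything else is a direct consequence of definitions and standard facts about convex and compact sets.
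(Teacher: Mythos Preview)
Your proof is correct and follows essentially the same approach as the paper: both start from the defining description of $\Lambda(y)$ as all $\lambda$ with $\lambda\T y\leq\lambda\T y'$ for every $y'\in\outcomeset$, reduce the index set to $\outcomeset_{ESN}$ via a convex-combination argument, and then read off convexity and compactness from the half-space description. The only cosmetic difference is that you invoke \cref{wsd-upper-image-rep} directly (handling the recession part $r$ explicitly via $\lambda\T r\geq 0$), whereas the paper first discards dominated points and then appeals to \cref{wsd-ysn-subsetet-conv-yesn}; your route is slightly more streamlined but not substantively different.
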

\begin{proof}
	Note that, for $y\in \outcomeset$, $\Lambda(y) = \Set{\lambda\in\Lambda : \lambda\T y \leq \lambda\T y' \text{ for all $y'\in \outcomeset$}}$.
	Since, for a given $\lambda$, the inequalities for the points in $\outcomeset_{\lambda}$ are the most strict, we can omit all constraints that are not in one of these sets.
	Thus, we may restrict $y'$ to the set $\outcomeset_{SN}$, which all live in $\conv \outcomeset_{ESN}$ by \cref{wsd-ysn-subsetet-conv-yesn}.
	But, if we have the inequalities for all extreme points in $\outcomeset_{ESN}$, then the inequalities for convex combinations of these also hold.
	Hence, $\Lambda(y) = \Set{\lambda\in\Lambda : \lambda\T y \leq \lambda\T y' \text{ for all } y'\in \outcomeset_{ESN}}$.
	Since the intersection of convex and closed sets is convex and closed again, the claim follows.
\end{proof}

Now let us take a look at the intersection of two weight set components.
\begin{theorem}[see {{\cite[Proposition~5]{PGE10}}}]
	\label{wsd-intersection-of-wscs-face}
	For $y,\, y'\in \outcomeset$, $\Lambda(y)\cap\Lambda(y')$ is the common face of $\Lambda(y)$ and $\Lambda(y')$ of maximal dimension.
\end{theorem}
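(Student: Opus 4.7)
The plan is to exhibit $\Lambda(y)\cap\Lambda(y')$ as an exposed face of both $\Lambda(y)$ and $\Lambda(y')$, cut out by the supporting hyperplane $\Set{\lambda : \lambda\T(y-y')=0}$, and then to note that any common face of $\Lambda(y)$ and $\Lambda(y')$ is automatically contained in this intersection, so the intersection must be the inclusion-maximal common face.

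First, by the characterisation from \cref{wsd-wsc-description} (extended to all of $\outcomeset$, as is established in the proof of that lemma), every $\lambda\in\Lambda(y)$ satisfies $\lambda\T y\leq\lambda\T y''$ for all $y''\in\outcomeset$; applied to $y''=y'$ this yields $\lambda\T(y-y')\leq 0$ on $\Lambda(y)$, and symmetrically $\lambda\T(y-y')\geq 0$ on $\Lambda(y')$. I would then verify the identity $\Lambda(y)\cap\Lambda(y') = \Set{\lambda\in\Lambda(y) : \lambda\T y = \lambda\T y'}$: the inclusion \enquote{$\subseteq$} holds because if both $y$ and $y'$ are optimal for $\problempws{\lambda}$ they share the common optimal value, while \enquote{$\supseteq$} follows because if $\lambda\in\Lambda(y)$ and $y'$ attains the same weighted sum value as $y$, then $y'$ is itself optimal for $\problempws{\lambda}$. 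The analogous identity with $\Lambda(y')$ replacing $\Lambda(y)$ holds by symmetry. Consequently $\Lambda(y)\cap\Lambda(y')$ is exactly the set on which the linear functional $\lambda\mapsto\lambda\T(y-y')$ attains its maximum value $0$ over $\Lambda(y)$, exhibiting it as an exposed face of $\Lambda(y)$; the same computation with a sign flip exhibits it as an exposed face of $\Lambda(y')$.

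For maximality, any set $F$ that is simultaneously a face of $\Lambda(y)$ and of $\Lambda(y')$ is in particular a subset of both, hence contained in their intersection. Since $\Lambda(y)\cap\Lambda(y')$ is itself a common face, it is the inclusion-maximal common face, and therefore of maximal dimension among all common faces. I expect the only mildly delicate point to be that under \cref{ass:wsd-bounded-closed} the sets $\Lambda(y)$ are compact and convex but need not be polyhedral (since $\outcomeset_{ESN}$ may be infinite); the argument above should stay within the purely convex-geometric description of an exposed face as the intersection with a supporting hyperplane, which is insensitive to this.
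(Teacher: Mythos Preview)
Your proposal is correct and follows essentially the same route as the paper: both arguments introduce the hyperplane $H=\Set{\lambda:\lambda\T(y-y')=0}$, use the valid inequality $\lambda\T y\leq\lambda\T y'$ on $\Lambda(y)$ to identify $\Lambda(y)\cap\Lambda(y')=\Lambda(y)\cap H$ as an exposed face of each component, and then observe that any common face lies in the intersection, making it the one of maximal dimension. Your treatment is in fact slightly more careful than the paper's in flagging that only \cref{ass:wsd-bounded-closed} (not finiteness of $\outcomeset_{ESN}$) is in force here, so exposed faces in the convex-geometric sense are the right notion.
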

\begin{proof}
	Let $H\coloneqq\Set{\lambda : (y-y')\T\lambda = 0}$ be the hyperplane containing those weights for which $y$ and $y'$ are equally good.
	Then $\Lambda(y)\cap\Lambda(y') \subseteq H\cap\Lambda$, since the weights in the intersection are optimal for both $y$ and $y'$, meaning the two points have the same objective value.
	The inequality $\lambda\T y \leq \lambda\T y'$ is valid for~$\Lambda(y)$, so $\Lambda(y)\cap H$ is an exposed face of $\Lambda(y)$ (or empty).
	The same goes for $\Lambda(y')\cap H$.
	
	We observe that $\lambda\in \Lambda(y)\cap H$ implies that $y$, and thus $y'$, is optimal for $\lambda$, so $\lambda\in\Lambda(y')$.
	This also works if we swap $y$ and $y'$, showing that $\Lambda(y)\cap H = \Lambda(y')\cap H$.
	Hence,
	\begin{displaymath}
		\Lambda(y)\cap\Lambda(y') = \Lambda(y)\cap\Lambda(y')\cap H = \Lambda(y)\cap\Lambda(y)\cap H = \Lambda(y)\cap H.
	\end{displaymath}
	So the intersection is a common face of $\Lambda(y)$ and $\Lambda(y')$.
	
	The part about maximal dimension is no real claim:
	since we showed that the intersection is a common face, it is the largest common face, so the one of maximal dimension.
	It simply serves to specify which common face the intersection is.
\end{proof}

From this we can obtain a result that concerns $\feasibleset_{SE}$.
By definition
\begin{displaymath}
	\feasibleset_{SE} = \feasibleset_E\cap \bigcup_{\lambda\in\Lambda} \feasibleset_\lambda.
\end{displaymath}
For $y\in\outcomeset_{ESN}$ we know that $\Lambda(y)$ is is a compact convex set by \cref{wsd-wsc-description}.
Thus, it is the convex hull of its extreme points by \cite[Corollary~18.5.1]{Roc70}.
If we just solve the weighted sums for these weights, we miss nothing.
\begin{theorem}
	\label{wsd-xse-decomposition}
	Let $S\coloneqq \bigcup_{y\in\outcomeset_{ESN}} \ext\Lambda(y)$, then
	\begin{displaymath}
		\feasibleset_{SE} = \feasibleset_E \cap \bigcup_{\lambda\in S} \feasibleset_\lambda.
	\end{displaymath}
\end{theorem}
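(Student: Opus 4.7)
The inclusion $\supseteq$ is immediate from the definition of $\feasibleset_{SE}$, so the task is to prove the reverse inclusion. Fix $x \in \feasibleset_{SE}$ and set $y \coloneqq \objective{x}$, which then lies in $\outcomeset_{SN}$. By \cref{wsd-wsc-description}, $\Lambda(y)$ is a convex compact set, and it is non-empty because $x$ is supported. Hence \cite[Corollary~18.5.1]{Roc70} yields $\ext\Lambda(y) \neq \emptyset$, so I may pick some $\mu \in \ext\Lambda(y)$. Since $\mu \in \Lambda(y)$ already gives $x \in \feasibleset_\mu$, the whole proof reduces to showing that $\mu \in S$.

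To verify $\mu \in S$, I use \cref{wsd-ysn-subsetet-conv-yesn} to write $y = \sum_{i=1}^q \alpha_i y^i$ as a convex combination of points $y^i \in \outcomeset_{ESN}$ with $\alpha_i > 0$. The preceding lemma then yields $\Lambda(y) = \bigcap_{i=1}^q \Lambda(y^i)$, so in particular $\Lambda(y) \subseteq \Lambda(y^1)$, and \cref{wsd-intersection-of-wscs-face} applied to $y$ and $y^1$ shows that $\Lambda(y) = \Lambda(y) \cap \Lambda(y^1)$ is a face of $\Lambda(y^1)$. Since an extreme point of a face of a convex set is also extreme in the ambient set, $\mu \in \ext\Lambda(y^1) \subseteq S$, as required.

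The only non-routine step is identifying $\Lambda(y)$ as a face of $\Lambda(y^1)$ for some $y^1 \in \outcomeset_{ESN}$; this is where \cref{wsd-intersection-of-wscs-face} does the heavy lifting together with the convex-combination decomposition of $y$. Everything else — the existence of an extreme point of a non-empty compact convex set, the face-of-face principle for extreme points, and the translation between $\mu \in \Lambda(y)$ and $x \in \feasibleset_\mu$ — is standard once the earlier results in this section are in hand.
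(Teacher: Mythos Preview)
Your proof is correct and follows essentially the same line as the paper's: both hinge on \cref{wsd-intersection-of-wscs-face} to exhibit $\Lambda(\objective{x})\cap\Lambda(y')$ as a face of $\Lambda(y')$ for some $y'\in\outcomeset_{ESN}$, and then use that an extreme point of a non-empty face of a compact convex set is extreme in the ambient set. The only cosmetic difference is how $y'$ is located---the paper picks a witnessing weight $\lambda$ for $x$ and invokes \cref{wsd-yesn-sufficient} to find $y'$ with $\lambda\in\Lambda(y')$, whereas you obtain $y'=y^1$ from the convex-combination decomposition of $\objective{x}$ and the lemma giving $\Lambda(\objective{x})=\bigcap_i\Lambda(y^i)$---but the face argument is identical.
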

\begin{proof}
	The inequality $\feasibleset_E \cap \bigcup_{\lambda\in S} \feasibleset_\lambda\subseteq \feasibleset_{SE}$ holds since we just restricted the weights we may use.
	Thus, let $x\in\feasibleset_{SE}$, so $x\in\feasibleset_E\cap\feasibleset_{\lambda}$ for some $\lambda\in\Lambda$.
	By \cref{wsd-yesn-sufficient}, $\lambda\in\Lambda(y)$ for some $y\in\outcomeset_{ESN}$ and $F\coloneqq\Lambda(\objective{x})\cap\Lambda(y)\neq\emptyset$.
	By \cref{wsd-intersection-of-wscs-face}, $F$ is a face of $\Lambda(y)$, so it contains an extreme point $\bar{\lambda}\in\ext\Lambda(y)$.
	Hence, $\bar{\lambda}\in\Lambda(\objective{x})$ and $x\in\feasibleset_{\bar{\lambda}}$.
\end{proof}

For the next results, which rely on polyhedral theory, we need that we only have finitely many extreme points, which we now assume.
\begin{assumption}
	\label{ass:wsd-fin-eps}
	Let $\outcomeset_{ESN}$ be finite, say $\outcomeset_{ESN} = \Set{y^1,\ldots,y^q}$.
\end{assumption}

As a direct corollary of \cref{wsd-upper-image-rep}, we obtain that $\upperimage$ is a polyhedron.
Similarly, as a corollary to \cref{wsd-wsc-description}, we get:
\begin{corollary}[see {{\cite[Proposition~3]{PGE10}}}]
	\label{wsd-wsc-polytopes}
	For $y\in\outcomeset$, $\Lambda(y)$ is a polytope.
\end{corollary}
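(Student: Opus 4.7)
The plan is to derive this as a direct consequence of \cref{wsd-wsc-description}, exploiting the new finiteness assumption. Recall that \cref{wsd-wsc-description} gives the description
\begin{displaymath}
	\Lambda(y) = \Set{\lambda\in\Lambda : \lambda\T(y-y')\leq 0 \text{ for all } y'\in\outcomeset_{ESN}},
\end{displaymath}
so $\Lambda(y)$ is cut out of $\Lambda$ by one linear inequality per element of $\outcomeset_{ESN}$.

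First I would observe that $\Lambda$ itself is a polytope: it is defined by the finitely many linear (in)equalities $\lambda\geq 0$ and $e\T\lambda = 1$, and it is bounded since $\lambda\in\Lambda$ forces $0\leq \lambda_i \leq 1$. Then, under \cref{ass:wsd-fin-eps}, the family $\outcomeset_{ESN} = \Set{y^1,\ldots,y^q}$ is finite, so the description above adds only $q$ further linear inequalities. Consequently, $\Lambda(y)$ is the intersection of finitely many closed half-spaces, hence a polyhedron.

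Finally, boundedness of $\Lambda(y)$ is immediate from $\Lambda(y)\subseteq\Lambda$, which is bounded. A bounded polyhedron is a polytope, so the claim follows. There is no real obstacle; the only thing to note is that \cref{ass:wsd-fin-eps} is precisely what turns the potentially infinite intersection in \cref{wsd-wsc-description} into a finite one.
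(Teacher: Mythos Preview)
Your proposal is correct and matches the paper's approach exactly: the paper states this result without proof, simply noting that it follows as a corollary of \cref{wsd-wsc-description} once \cref{ass:wsd-fin-eps} is in force, which is precisely the argument you spell out.
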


It turns out that in this polyhedral setting, we cannot discard any more points in the representation of the weight set from \cref{wsd-yesn-sufficient} since each extreme-supported non-dominated point is the unique optimal solution for some weight.
This is, in fact, a classification of the points in $\outcomeset_{ESN}$.
\begin{lemma}
	\label{wsd-wsc-for-yesn-unique-weight}
	Let $y\in \outcomeset$.
	Then $y\in \outcomeset_{ESN}$ if and only if there exists a $0<\lambda\in\Lambda$ with 
	\begin{displaymath}
		\lambda\in\Lambda(y)\setminus \bigcup_{y'\in \outcomeset: y'\neq y} \Lambda(y').
	\end{displaymath}
\end{lemma}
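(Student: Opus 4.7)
The plan is to prove the two directions separately, with the forward direction being the delicate one.

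For the forward direction, I would identify $y\in\outcomeset_{ESN}$ with a vertex of $\upperimage$ via \cref{prelims-characterisation-yesn}. Under \cref{ass:wsd-fin-eps}, \cref{wsd-upper-image-rep} turns $\upperimage=\conv\outcomeset_{ESN}+\RR^p_{\geq 0}$ into a polyhedron, and standard polyhedral theory then produces a $\lambda\in\RR^p$ making $y$ the \emph{unique} minimizer of $\lambda\T z$ on $\upperimage$; equivalently, $\lambda$ sits in the interior of the normal cone of $\upperimage$ at $y$, which is full-dimensional because $y$ is a vertex. Any coordinate $\lambda_i<0$ would violate boundedness of the minimum (vary $z$ along the recession direction $e_i$), so $\lambda\geq 0$; and since the interior of a full-dimensional cone contained in $\RR^p_{\geq 0}$ must lie in $\RR^p_{>0}$ (otherwise a perturbation around an interior point with some $\lambda_i=0$ would leave the orthant), in fact $\lambda>0$. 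Normalising by $e\T\lambda$ places $\lambda$ in $\Lambda$, and uniqueness on $\upperimage$ restricts to uniqueness on the subset $\outcomeset\subseteq\upperimage$, giving $\lambda\in\Lambda(y)\setminus\bigcup_{y'\neq y}\Lambda(y')$.

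For the backward direction, given such a strictly positive $\lambda$, I would show that $y$ is in fact the unique minimizer of $\lambda\T z$ over all of $\upperimage$; this immediately makes $y$ an extreme point of $\upperimage$ and hence a member of $\outcomeset_{ESN}$ by \cref{prelims-characterisation-yesn}. For any $z=v+r\in\upperimage$ with $v\in\conv\outcomeset$ and $r\geq 0$, expanding $v=\sum_j\beta_jz_j$ with $z_j\in\outcomeset$ gives $\lambda\T v=\sum_j\beta_j\lambda\T z_j\geq\lambda\T y$ with equality iff every active $z_j$ attains the minimum, which by the uniqueness hypothesis forces $z_j=y$ and hence $v=y$; meanwhile $\lambda>0$ makes $\lambda\T r\geq 0$ with equality iff $r=0$. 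Thus $\lambda\T z\geq\lambda\T y$ with equality iff $z=y$, as required.

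The main obstacle is securing strict positivity of $\lambda$ in the forward direction; this is where \cref{ass:wsd-fin-eps} really matters, since for a general closed convex set the normal cone at an extreme point can fail to be full-dimensional (as at a smooth boundary point of a strictly convex body) and no strictly positive supporting weight need exist. Once the normal-cone argument delivers both uniqueness and strict positivity, the remaining bookkeeping just transfers these properties between $\upperimage$ and $\outcomeset$ via their containment relations.
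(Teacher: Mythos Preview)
Your proposal is correct and follows essentially the same route as the paper: both arguments pass through the equivalence \enquote{$y$ is an extreme point of the polyhedron $\upperimage$ iff $y$ is the unique minimiser of some linear functional $\lambda$}, then use the recession cone $\RR^p_{\geq 0}$ to force $\lambda\geq 0$ and the non-uniqueness that would result from a zero coordinate to force $\lambda>0$. The only cosmetic differences are that the paper invokes this equivalence directly from a cited polyhedral result and handles both directions at once, whereas you phrase the forward direction via the interior of the normal cone and spell out the backward direction explicitly by decomposing $z=v+r$.
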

\begin{proof}
	Note that, by polyhedral theory~\cite[Theorem~4.6]{NW88ch4}, $y\in \outcomeset_{ESN}$ is an extreme point of $\upperimage$ if and only if $y$ is the unique optimal solution (in $\upperimage$) for some linear objective $\lambda$.
	Since $\lambda$ cannot have negative entries without making the problem unbounded, we get that $\lambda\geq 0$.
	If $\lambda$ has a zero-entry, the solution $y$ would not be unique, so, in fact, $\lambda>0$ and we may assume that $\lambda\in\Lambda$.
	This shows that $y\in \outcomeset_{ESN}$ if and only if there exists a $0<\lambda\in\Lambda$ for which $y$ is the unique optimal solution.
	In particular, this is equivalent to $\lambda\notin \Lambda(y')$ for any other $y'\in \outcomeset$.
\end{proof}

\begin{corollary}[see {{\cite[Corollary~1]{PGE10}}}]
	\label{wsd-yesn-necessary}
	If $\Lambda = \bigcup_{y\in S} \Lambda(y)$ for $S\subseteq \outcomeset$, then $\outcomeset_{ESN}\subseteq S$.
\end{corollary}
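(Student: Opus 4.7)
The plan is to derive this directly as a consequence of \cref{wsd-wsc-for-yesn-unique-weight}, which asserts that every extreme-supported non-dominated point $y$ admits a witnessing weight $\lambda$ lying in $\Lambda(y)$ and in no other weight set component $\Lambda(y')$ for $y'\in\outcomeset\setminus\{y\}$.

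Concretely, I would fix an arbitrary $y\in\outcomeset_{ESN}$ and apply \cref{wsd-wsc-for-yesn-unique-weight} to obtain a weight $\lambda\in\Lambda$ with $\lambda>0$ such that $\lambda\in\Lambda(y)$ and $\lambda\notin\Lambda(y')$ for every $y'\in\outcomeset$ with $y'\neq y$. Because the hypothesis gives $\Lambda=\bigcup_{y'\in S}\Lambda(y')$, this $\lambda$ must be contained in some $\Lambda(y^\ast)$ with $y^\ast\in S$. Since $S\subseteq\outcomeset$ and $\lambda$ fails to lie in any $\Lambda(y')$ for $y'\neq y$, we are forced to conclude $y^\ast=y$, so $y\in S$. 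As $y\in\outcomeset_{ESN}$ was arbitrary, $\outcomeset_{ESN}\subseteq S$.

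There is really no obstacle here: the result is almost a restatement of \cref{wsd-wsc-for-yesn-unique-weight} combined with the covering hypothesis. The only subtlety worth flagging is that the witnessing weight $\lambda$ from \cref{wsd-wsc-for-yesn-unique-weight} is taken in $\Lambda$ itself (so that the covering hypothesis applies), which is immediate because \cref{wsd-wsc-for-yesn-unique-weight} already produces $\lambda\in\Lambda$.
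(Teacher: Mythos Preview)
Your argument is correct and is exactly the intended derivation: the paper states this result as an immediate corollary of \cref{wsd-wsc-for-yesn-unique-weight} without spelling out a proof, and what you have written is precisely the one-line deduction the paper has in mind.
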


Another classification of the points in $\outcomeset_{ESN}$ is that they are exactly those points that yield full-dimensional polytopes.
\begin{theorem}[see {{\cite[Proposition~6]{PGE10}}}]
	\label{wsd-yesn-iff-wsc-full-dim}
	For $y\in \outcomeset$, $\Lambda(y)$ has dimension $p-1$ if and only if $y\in \outcomeset_{ESN}$.
\end{theorem}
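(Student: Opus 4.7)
The plan rests on two earlier results: \cref{wsd-wsc-description}, which exhibits $\Lambda(y)$ as a finite intersection of half-spaces (using the finiteness from \cref{ass:wsd-fin-eps}), and \cref{wsd-wsc-for-yesn-unique-weight}, which supplies a strictly positive weight distinguishing any $y\in\outcomeset_{ESN}$ from all other outcomes. Since $\Lambda$ is the standard simplex, it has affine dimension $p-1$, and hence $\Lambda(y)\subseteq\Lambda$ always satisfies $\dim\Lambda(y)\leq p-1$; the task is to characterise when equality holds.

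For the direction $y\in\outcomeset_{ESN}\Rightarrow\dim\Lambda(y)=p-1$, I would invoke \cref{wsd-wsc-for-yesn-unique-weight} to obtain $\lambda>0$ in $\Lambda(y)$ with $\lambda\notin\Lambda(y')$ for every $y'\in\outcomeset_{ESN}\setminus\Set{y}$. Combined with \cref{wsd-wsc-description}, this gives \emph{strict} inequalities $\lambda\T y<\lambda\T y'$, only finitely many of which need to be checked. Since strict inequalities are open conditions and $\lambda$ lies in the relative interior of $\Lambda$ (as $\lambda>0$), a sufficiently small relative neighbourhood of $\lambda$ in $\Lambda$ still satisfies all of these strict inequalities and has strictly positive coordinates, hence sits inside $\Lambda(y)$. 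This forces $\dim\Lambda(y)=\dim\Lambda=p-1$.

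For the converse, assume $\dim\Lambda(y)=p-1$; in particular $\Lambda(y)$ is non-empty, so $y\in\outcomeset_{SN}$. I would argue by contradiction that $y\in\outcomeset_{ESN}$. If not, the lemma immediately preceding \cref{wsd-yesn-sufficient} expresses $y$ as a convex combination of distinct $y^1,\ldots,y^q\in\outcomeset_{ESN}$ with $q\geq 2$, and gives $\Lambda(y)=\bigcap_{i=1}^q\Lambda(y^i)$. Then $\Lambda(y^1)\cap\Lambda(y^2)$ contains $\Lambda(y)$ and hence has dimension at least $p-1$; since it is contained in $\Lambda(y^1)\subseteq\Lambda$, both $\Lambda(y^1)\cap\Lambda(y^2)$ and $\Lambda(y^1)$ must be of dimension exactly $p-1$. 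By \cref{wsd-intersection-of-wscs-face}, $\Lambda(y^1)\cap\Lambda(y^2)$ is a face of $\Lambda(y^1)$, and a face whose dimension matches that of its polytope coincides with it, so $\Lambda(y^1)\subseteq\Lambda(y^2)$. This contradicts \cref{wsd-wsc-for-yesn-unique-weight} applied to $y^1$, which produces a weight in $\Lambda(y^1)$ lying in no other $\Lambda(y')$.

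I expect the main care to go into the backward direction: morally it is obvious that strict inequalities carve out a full-dimensional set, but the argument genuinely uses that $\lambda>0$ sits in the relative interior of $\Lambda$ (so perturbations stay non-negative) and that \cref{ass:wsd-fin-eps} keeps the number of active constraints finite. The forward direction is then a clean dimension count, bootstrapping directly on the face structure established in \cref{wsd-intersection-of-wscs-face,wsd-wsc-for-yesn-unique-weight}.
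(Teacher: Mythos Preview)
Your argument is sound in outline but has one real gap in the converse direction: the inference ``$\Lambda(y)\neq\emptyset$, so $y\in\outcomeset_{SN}$'' is not valid as stated. A dominated $y$ can perfectly well have $\Lambda(y)\neq\emptyset$ (take any $\lambda$ supported on the coordinates where $y$ agrees with its dominator). What you actually need is that the full hypothesis $\dim\Lambda(y)=p-1$ forces $y\in\outcomeset_N$: if $y'\lneq y$ for some $y'\in\outcomeset$, then every $\lambda\in\Lambda(y)$ satisfies $(y-y')\T\lambda=0$, and since $y-y'\gneq 0$ this confines $\Lambda(y)$ to a proper face of $\Lambda$, of dimension at most $p-2$. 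With this patch your use of the decomposition lemma for $\outcomeset_{SN}$ is legitimate and the remainder of the argument goes through.

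Your route for the converse is genuinely different from the paper's. The paper works directly with the polyhedral description of \cref{wsd-wsc-description}: it takes an inner point $\hat\lambda$ of $\Lambda(y)$ and observes that, since the equality set has rank~$1$, every constraint tight at $\hat\lambda$ is a scalar multiple of $e\T\lambda=1$; this forces each $(y-y^i)\T\lambda\leq 0$ to be either strict at $\hat\lambda$ or identically zero, and the latter gives $y=y^i\in\outcomeset_{ESN}$ (the all-strict case being excluded by \cref{wsd-upper-image-rep}). You instead bootstrap on the face structure from \cref{wsd-intersection-of-wscs-face} together with the decomposition $\Lambda(y)=\bigcap_i\Lambda(y^i)$, deriving $\Lambda(y^1)\subseteq\Lambda(y^2)$ and contradicting \cref{wsd-wsc-for-yesn-unique-weight}. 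The paper's proof is more self-contained (it needs neither \cref{wsd-intersection-of-wscs-face} nor the decomposition lemma), whereas yours highlights a pleasant structural consequence: distinct extreme points can only share a lower-dimensional face of their weight set components.
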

\begin{proof}
	Recall that a $\Lambda(y)$ is a polytope by \cref{wsd-wsc-polytopes} and it has dimension $p-1$ if and only if its equality matrix has rank~$1$ by \cite[Propositions~2.4]{NW88ch4}. 
	Also by \cref{wsd-wsc-description},
	\begin{displaymath}
		\Lambda(y) = \Set{\lambda: \lambda\geq 0, e\T \lambda=1, (y-y^1)\T\lambda \leq 0,\ldots,(y-y^q)\T\lambda\leq 0}.
	\end{displaymath}
	
	If $y\in \outcomeset_{ESN}$, we get some weight $0<\lambda\in\Lambda(y)$ that satisfies $\lambda\T y' > \lambda\T y$ for all $y\neq y'\in \outcomeset$ by \cref{wsd-wsc-for-yesn-unique-weight}.
	Thus, the only inequalities of $\Lambda(y)$ that are satisfied with equality for $\lambda$ are $e\T \lambda = 1$ and $(y-y)\T\lambda = 0$.
	These are clearly in the equality set and the equality matrix has rank~1.
	
	For the converse, assume that $\Lambda(y)$ has dimension $p-1$ and let $\hat{\lambda}$ be an inner point of $\Lambda(y)$.
	Then, since the equality matrix has rank $1$, all constraints that $\hat{\lambda}$ satisfies with equality are scalar multiples of $e\T\lambda=1$.
	Note that all other right hand sides are $0$, so such a constraint can only be of the form $o\T\lambda = 0$, which means that $y=y^i$ for some $i$.
	In this case, $y\in \outcomeset_{ESN}$ as desired.
	The only remaining alternative, that $\hat{\lambda}\T y < \hat{\lambda}\T y^i$ for all $i$ is not possible since $y\in\outcomeset\subseteq\upperimage$, so $y$ can be written as a convex combination of the points in $\outcomeset_{ESN}$ plus a non-negative vector by \cref{wsd-upper-image-rep}.
	In particular, at least one of the extreme-supported non-dominated points must be at least as good.
\end{proof}
\section{Approximation}

In this section, we shall show that the supported efficient solutions are, in fact, a $\Set{(2,1),(1,2)}$-approximation for bi-objective problems (and a bit more).
Everything required for this proof can already be found in \cite{BRTV21}, though the authors only obtain a $\Set{(2+\varepsilon,1),(1,2+\varepsilon)}$-approximation.
But the $\varepsilon$ that appears in their proof is allowed to be zero, as we shall verify here.

Before we do, let us briefly recap them here.
We need to make the typical assumption for approximation algorithms here, namely that $\outcomeset\subseteq \RR^p_{>0}$.
We also note that the result specifically targets minimisation problems and does not hold for maximisation ones (see Appendix~B in \cite{BRTV21}).

Now let us state the definitions:
in single-objective optimisation, a feasible solution $x\in \feasibleset$ is an $\alpha$-approximation ($\alpha\geq 1$) of a solution $x'$ if $\objective{x} \leq \alpha\cdot\objective{x'}$.
If $x$ $\alpha$-approximates every other feasible solution of \problemp[1], it is an $\alpha$-approximation of \problemp[1].
A straightforward generalisation to the multi-objective case is the following:
let $\alpha\in\RR^p_{\geq 1}$.
A feasible solution $x\in \feasibleset$ $\alpha$-approximates another feasible solution $x' \in \feasibleset$ if $\objective[i]{x} \leq \alpha_i \cdot \objective[i]{x'}$ for all $i=1,\ldots,p$.
Again, if $x$ $\alpha$-approximates every feasible solution of~\problemp, it is called an $\alpha$-approximation of \problemp.

In multi-objective optimisation, this is very restrictive however, since we do not have a single optimal solution, but a set.
Hence, it makes sense to allow us to approximate these solutions by a set as well.
\begin{definition}
	Let $\alpha\in\RR^p_{\geq 1}$.
	A set $\calP \subseteq \feasibleset$ is an \emph{$\alpha$-approximation} of $\problemp$ if every feasible solution $x' \in \feasibleset$ is $\alpha$-approximated by a solution $x \in \calP$.
\end{definition}

To generalise these natural definitions, \textcite{BRTV21} introduced the following version, which allows a set of approximation factors. 
\begin{definition}
	Let $\calA \subseteq \RR^p_{\geq 1}$ be a set of approximation factors. 
	A set $\calP \subseteq \feasibleset$ is an \emph{$\calA$-approximation} for $\problemp$ if every feasible solution $x' \in \feasibleset$ is $\alpha$-approximated by a feasible solution $x \in \calP$ for some $\alpha \in \calA$.
\end{definition}

Now that we have recalled the definitions, we can prove the approximation result, which requires the following two lemmas.
\begin{lemma}
	\label{approx-first-approximation}
	Let $x \in \feasibleset$ be an optimal solution of $\problempws{\lambda}$ for some $\lambda\gneq 0$. 
	For every $x' \in \feasibleset$, there exists a $j \in \Set{1,\ldots,p}$ such that $x$ $1$-approximates $x'$ with respect to $\objective[j]{}$, that is,
	\begin{displaymath}
		\objective[j]{x} \leq \objective[j]{x'} \text{ for some } j \in \Set{1,\ldots,p}.
	\end{displaymath}    
\end{lemma}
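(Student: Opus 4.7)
My plan is a short proof by contradiction. Suppose, for contradiction, that no such index $j$ exists, meaning that $\objective[j]{x} > \objective[j]{x'}$ holds strictly for every $j\in\Set{1,\ldots,p}$. Then I would like to compare $\lambda\T\objective{x}$ with $\lambda\T\objective{x'}$ and derive that $x$ is not optimal for $\problempws{\lambda}$ after all.

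The coordinatewise strict inequalities give $\lambda_j\objective[j]{x} \geq \lambda_j\objective[j]{x'}$ for every $j$, since $\lambda\geq 0$, with equality precisely when $\lambda_j=0$. The key step is to use the assumption $\lambda\gneq 0$, which guarantees at least one index $k$ with $\lambda_k>0$; for this $k$ the inequality $\lambda_k\objective[k]{x} > \lambda_k\objective[k]{x'}$ is strict. Summing over $j$ therefore yields $\lambda\T\objective{x} > \lambda\T\objective{x'}$, contradicting optimality of $x$ for $\problempws{\lambda}$.

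There is no real obstacle here; the only subtlety is that $\lambda$ is only required to be non-negative and non-zero, not strictly positive, so one must be careful to locate the one coordinate where $\lambda$ is positive in order to turn a chain of weak inequalities into a strict one. Everything else is immediate from the definition of optimality.
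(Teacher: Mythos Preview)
Your proof is correct and follows exactly the same contradiction argument as the paper's proof. If anything, you are slightly more explicit in isolating a coordinate $k$ with $\lambda_k>0$ to justify the strict inequality after summation, whereas the paper simply asserts $\sum_i \lambda_i\objective[i]{x'} < \sum_i \lambda_i\objective[i]{x}$ directly.
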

\begin{proof}
	Let $x' \in \feasibleset$ and suppose that $\objective[i]{x'} < \objective[i]{x}$ for all $i\in\Set{1,\ldots,p}$. 
	Then $\sum_{i=1}^{p} \lambda_i \cdot \objective[i]{x'} < \sum_{i=1}^{p} \lambda_i \cdot \objective[i]{x}$, contradicting the optimality of $x$.
\end{proof}

\begin{lemma}
	\label{approx-ws-approximation}
	Let $x' \in \feasibleset$ and $\lambda$ be given by $\lambda_i = \tfrac{1}{\objective[i]{x'}}>0$ for $i\in\Set{1,\ldots,p}$.
	If $x$ is optimal for $\problempws{\lambda}$, then $x$ $\alpha$-approximates $x'$ for some $\alpha \in \RR_{\geq 1}^p$ with
	\begin{displaymath}
		\sum_{i\colon \alpha_i>1}\alpha_i \leq p \qquad \text{ and } \qquad \alpha_j = 1 \text{ for at least one $j$}.
	\end{displaymath}
\end{lemma}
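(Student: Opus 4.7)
The plan is to define $\alpha$ coordinate-wise in the tightest possible way and then extract the two desired conditions from, respectively, the weighted-sum optimality of $x$ and the preceding \cref{approx-first-approximation}. Concretely, I would set
\[
	\alpha_i \coloneqq \max\!\Set{1, \, \objective[i]{x}/\objective[i]{x'}}
\]
for $i \in \Set{1,\ldots,p}$. This is well-defined because $\outcomeset \subseteq \RR_{>0}^p$ guarantees $\objective[i]{x'}>0$, and it immediately yields $\alpha_i \geq 1$ together with $\objective[i]{x} \leq \alpha_i \cdot \objective[i]{x'}$, so $x$ does $\alpha$-approximate $x'$ by definition.

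For the sum bound, I would exploit the weighted-sum inequality $\sum_i \lambda_i \objective[i]{x} \leq \sum_i \lambda_i \objective[i]{x'}$, which with the specific choice $\lambda_i = 1/\objective[i]{x'}$ turns into
\[
	\sum_{i=1}^{p} \frac{\objective[i]{x}}{\objective[i]{x'}} \leq p.
\]
Since each summand is strictly positive (again by $\outcomeset \subseteq \RR_{>0}^p$), dropping the indices with $\alpha_i = 1$ only decreases the left-hand side, while on the remaining indices the summand equals $\alpha_i$ by construction. This gives $\sum_{i:\alpha_i > 1} \alpha_i \leq p$, as required.

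For the second condition, I would invoke \cref{approx-first-approximation} directly: since $x$ is optimal for $\problempws{\lambda}$ with $\lambda \gneq 0$, there exists some index $j$ with $\objective[j]{x} \leq \objective[j]{x'}$, and for this $j$ the definition of $\alpha_j$ forces $\alpha_j = 1$.

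I do not foresee a real obstacle here; the statement is essentially a careful bookkeeping exercise on top of the two preceding observations. The only point worth stating explicitly is where positivity of the objective values is used, namely to guarantee that $\lambda$ is well-defined, that all ratios $\objective[i]{x}/\objective[i]{x'}$ are positive (so discarding the indices with $\alpha_i = 1$ is legitimate), and that the inequality between weighted sums translates into the clean bound $\sum_i \objective[i]{x}/\objective[i]{x'} \leq p$ without having to worry about sign issues.
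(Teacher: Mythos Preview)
Your argument is correct and essentially identical to the paper's own proof: both define $\alpha_i = \max\Set{1, \objective[i]{x}/\objective[i]{x'}}$, use optimality of $x$ for $\problempws{\lambda}$ to bound $\sum_i \objective[i]{x}/\objective[i]{x'} \leq p$, and then invoke \cref{approx-first-approximation} for the index with $\alpha_j = 1$. Your write-up is slightly more explicit about where positivity is used, but the structure matches the paper exactly.
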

\begin{proof}
	Let $\lambda \in \RR^p_{>0}$ as defined above. 
	Since $x$ is optimal for $\problempws{\lambda}$,
	\begin{displaymath}
		\sum_{i=1}^{p}\lambda_i\cdot \objective[i]{x} 
		\leq  \sum_{i=1}^{p}\lambda_i\cdot \objective[i]{x'} 
		= \sum_{i=1}^{p}{\frac{1}{\objective[i]{x'}}\cdot \objective[i]{x'}} 
		= p.
	\end{displaymath}
	By setting $\alpha_i = \max\Set{1,\tfrac{\objective[i]{x}}{\objective[i]{x'}}}$ for $i\in\Set{1,\ldots,p}$, $x$ $\alpha$-approximates $x'$ and
	\begin{displaymath}
		\sum_{i\colon \alpha_i>1}\alpha_i 
		\leq \sum_{i=1}^{p} \frac{1}{\objective[i]{x'}}\cdot \objective[i]{x} 
		= \sum_{i=1}^{p}\lambda_i\cdot \objective[i]{x}
		\leq p.
	\end{displaymath}
	
	By \cref{approx-first-approximation}, $\objective[j]{x} \leq \objective[j]{x'}$ for some $j \in \Set{1,\ldots,p}$ and $\alpha_j = 1$.
\end{proof}

This yields the approximation result as a corollary.
\begin{corollary}
	\label{approx-ese-approximation}
	The set $\feasibleset_{SE}$ is an $\calA$-approximation of all efficient solutions, where
	\begin{displaymath}
		\calA = \Set{(\alpha_1,\ldots,\alpha_p)\setdelimiter \alpha_i \geq 1 \text{ for all } i,\, \alpha_j = 1 \text{ for some $j$, and } \sum_{i\colon \alpha_i>1}{\alpha_i} = p}.
	\end{displaymath}
	In the special case for $p=2$, we obtain a $\Set{(1,2),(2,1)}$-approximation.
\end{corollary}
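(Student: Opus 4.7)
The plan is to derive the corollary as a direct consequence of \cref{approx-ws-approximation}. Given an arbitrary efficient solution $x'\in\feasibleset_E$, I would define the weight vector $\lambda\in\RR^p_{>0}$ by $\lambda_i\coloneqq 1/\objective[i]{x'}$; this is well-defined and strictly positive because $\outcomeset\subseteq\RR^p_{>0}$. I would then take $x$ to be any optimal solution of $\problempws{\lambda}$. Since $\lambda>0$, any feasible solution dominating $x$ would yield a strictly smaller weighted sum value, so $x\in\feasibleset_E$; combined with $x\in\feasibleset_\lambda$ this gives $x\in\feasibleset_{SE}$.

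Applying \cref{approx-ws-approximation} to this pair then produces an approximation factor $\alpha\in\RR_{\geq 1}^p$ such that $x$ $\alpha$-approximates $x'$, at least one component $\alpha_j$ equals~$1$, and $s\coloneqq\sum_{i\colon\alpha_i>1}\alpha_i\leq p$. The only mismatch with the definition of $\calA$ is that $\calA$ requires the sum to equal $p$ exactly.

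To bridge this gap, I would use the trivial observation that if $x$ $\alpha$-approximates $x'$, then it also $\beta$-approximates $x'$ for every $\beta\geq\alpha$; hence I am free to enlarge individual coordinates of $\alpha$. If some coordinate $\alpha_{i_0}$ already exceeds~$1$, I would replace it by $\alpha_{i_0}+(p-s)$, which keeps $\alpha_{i_0}>1$ and forces the sum to become~$p$. If instead $\alpha=(1,\ldots,1)$ (so $s=0$ and $f(x)\leq f(x')$ componentwise), I would pick any index $i_0\neq j$ and raise $\alpha_{i_0}$ to $p$; since $p\geq 2$ this index exists and the resulting vector still lies in $\calA$ and still certifies the approximation. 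This establishes the first assertion.

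For the special case $p=2$, the sum condition forces the single coordinate that exceeds~$1$ to equal~$2$, while the other coordinate is~$1$, so $\alpha\in\Set{(1,2),(2,1)}$, giving the second assertion. I do not anticipate any genuine obstacle here; the argument is essentially a repackaging of \cref{approx-ws-approximation}, with the only mildly delicate step being the bookkeeping needed to turn the inequality $s\leq p$ into an equality while preserving membership of the approximation factor in $\calA$.
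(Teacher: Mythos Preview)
Your proposal is correct and follows precisely the route the paper intends: the paper gives no separate proof for this corollary at all, merely stating that it ``yields the approximation result as a corollary'' of \cref{approx-ws-approximation}. Your write-up actually supplies two details the paper glosses over, namely the verification that the weighted-sum optimiser $x$ lies in $\feasibleset_{SE}$ (via $\lambda>0$) and the bookkeeping that upgrades the inequality $\sum_{i:\alpha_i>1}\alpha_i\leq p$ from the lemma to the equality required by the stated~$\calA$; both are handled cleanly. The only tacit assumption you share with the paper is that $\problempws{\lambda}$ attains its infimum, which neither of you justifies from the standing hypothesis $\outcomeset\subseteq\RR^p_{>0}$ alone; this is not a defect of your argument relative to the paper's.
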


By \cite[Theorem~2]{BRTV21} we know that this result is tight.
\begin{lemma}
	The approximation result obtained in \cref{approx-ese-approximation} is tight, that is, the result becomes false when replacing $p$ by $p-\varepsilon$ for any positive $\varepsilon$.
\end{lemma}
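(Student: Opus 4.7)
My plan is to exhibit, for every positive $\varepsilon$, an explicit instance on which the extreme-supported efficient solutions cannot $\calA'$-approximate some efficient solution, where $\calA'$ is $\calA$ with $p$ replaced by $p-\varepsilon$. I would carry over the construction of \cite[Theorem~2]{BRTV21}, adapted to the exact wording of \cref{approx-ese-approximation}.

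Concretely, fix $p$ and a large parameter $N$, and consider a $p$-objective instance whose outcome set consists of the $p+1$ points $y^1,\ldots,y^p,y^*$, where $y^i_i = 1$ and $y^i_j = N$ for $j\neq i$, and $y^* = (a,\ldots,a)\T$ with $a \coloneqq N - (N-1)/p + \delta$ for a small fixed $\delta > 0$. The first step is to determine $\feasibleset_{SE}$. Each $y^i$ is the unique minimiser of $\problempws{e_i}$ (since $1 < a < N$ for $N$ large), so it is extreme-supported by \cref{wsd-wsc-for-yesn-unique-weight}. The point $y^*$ is efficient because $y^i_j = N > a$ for $j\neq i$ means no $y^i$ dominates it. However, $y^*$ is \emph{not} supported: for any $\lambda \in \Lambda$ one has $\min_i \lambda\T y^i = \min_i (N - (N-1)\lambda_i) \leq N - (N-1)/p < a = \lambda\T y^*$, so no weight makes $y^*$ optimal. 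Therefore only the $y^i$ are available to approximate $y^*$.

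Next I would compute the componentwise minimal approximation factor with which $y^i$ approximates $y^*$: since $y^i_i/a < 1$ and $y^i_j/a = N/a > 1$ for $j \neq i$, this minimal factor is $\alpha_i = 1$ and $\alpha_j = N/a$ for $j\neq i$, giving $\sum_{k\colon \alpha_k>1}\alpha_k = (p-1)N/a$. A routine calculation yields $(p-1)N/a \to p$ as $N\to\infty$ with $\delta$ fixed, so for any prescribed $\varepsilon > 0$ one can choose $N$ large enough that $(p-1)N/a > p - \varepsilon$. Then for any $i$, \emph{every} $\alpha$ for which $y^i$ $\alpha$-approximates $y^*$ satisfies $\sum_{k\colon\alpha_k>1}\alpha_k > p - \varepsilon$, so no element of $\calA'$ works, establishing tightness.

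The only obstacle I expect is bookkeeping around the condition $\alpha_j = 1$ for some $j$ and the strict inequalities defining which coordinates contribute to $\sum_{k\colon\alpha_k>1}\alpha_k$; in particular I should confirm that the componentwise minimal $\alpha$ above genuinely minimises this sum over all valid approximation factors, which follows since raising $\alpha_i$ above $1$ would only add to the sum, and all other coordinates are already forced. Everything else is an elementary limit.
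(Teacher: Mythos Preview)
The paper does not actually prove this lemma: it simply invokes \cite[Theorem~2]{BRTV21} in the sentence preceding the statement and leaves it at that. Your proposal goes further by spelling out the construction from that reference explicitly, and the argument you sketch is correct. A couple of minor remarks: you do not need \cref{wsd-wsc-for-yesn-unique-weight} to conclude that the $y^i$ are supported (optimality for $\lambda=e_i$ already gives $y^i\in\outcomeset_{SN}$, which is all the corollary requires), and your observation that the coordinate satisfying $\alpha_j=1$ must be $j=i$ (since $N/a>1$ forces $\alpha_j>1$ for $j\neq i$) is exactly the right way to handle the bookkeeping you flagged. Otherwise your limit computation $(p-1)N/a\to p$ from below is routine and the tightness follows as you describe.
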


We end on two final notes:
First, we did not need the entire set set $\feasibleset_{SE}$ to obtain the approximation:
it suffices to have one preimage for each point in $\outcomeset_{SN}$.
Even better, if \cref{ass:wsd-solutions-closed} is satisfied, then we know that the extreme-supported non-dominated points contain an optimal point for every possible weight.
Hence, a preimage for each point in $\outcomeset_{ESN}$ is also sufficient.

With regard to \cref{wsd-sufficient-criterion-for-assumption}, if $\upperimage=\emptyset$ then $\outcomeset_{ESN}=\emptyset$ is an approximation.
Also, we already needed to assume that $\outcomeset\subseteq \RR_{>0}^p$.
Thus, the only additional assumptions are that $\upperimage$ is closed and $\outcomeset_{ESN}$ is bounded.
Let us specify this:
\begin{corollary}
	If \cref{ass:wsd-solutions-closed} holds, for example if $\upperimage$ is closed and $\outcomeset_{ESN}$ is bounded, then any set $S\subseteq\feasibleset$ with $\objective{S}\supseteq \outcomeset_{ESN}$ is \calA-approximation where $\calA$ is defined as in \cref{approx-ese-approximation}.
\end{corollary}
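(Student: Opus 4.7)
The plan is to reduce to \cref{approx-ws-approximation} by using the weight set decomposition results from Section~3 to locate, for each target $x'$, an extreme-supported non-dominated point that is optimal for the weights prescribed by that lemma, and then to lift it to a preimage in $S$.

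Concretely, fix $x'\in\feasibleset$ and set $\bar{\lambda}_i\coloneqq 1/\objective[i]{x'}$, which is strictly positive because $\outcomeset\subseteq\RR^p_{>0}$. To bring this into the normalised weight set, I would then define $\lambda\coloneqq \bar{\lambda}/(e\T\bar{\lambda})\in\Lambda$; optimality with respect to $\bar{\lambda}$ and with respect to $\lambda$ coincide, since the two linear objectives differ only by a positive scalar. Next I would verify that \cref{ass:wsd-bounded-closed} is in force: $\upperimage$ is closed by hypothesis, bounded from below by~$0$ because $\outcomeset\subseteq\RR^p_{>0}$, and we may assume non-emptiness (else the corollary is vacuous). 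Thus \cref{wsd-yesn-sufficient} applies and yields some $y\in\outcomeset_{ESN}$ with $\lambda\in\Lambda(y)$, i.e., $y$ is optimal for $\problempws{\lambda}$.

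Because $\objective{S}\supseteq\outcomeset_{ESN}$, I would then pick any $x\in S$ with $\objective{x}=y$. This $x$ is optimal for the weighted-sum problem with weights $\lambda$, hence also with weights $\bar{\lambda}$. Applying \cref{approx-ws-approximation} to this $x$, $x'$, and $\bar{\lambda}$ produces an $\alpha\in\calA$ such that $x$ $\alpha$-approximates $x'$, which is exactly what the corollary asserts.

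I do not anticipate any genuine obstacle beyond bookkeeping: the only step requiring care is checking that \cref{ass:wsd-bounded-closed} is satisfied so that the weight set decomposition machinery (in particular \cref{wsd-yesn-sufficient}) is available, and this follows at once from the section's standing positivity assumption together with the closedness hypothesis of the corollary. Everything else chains directly from results already proved.
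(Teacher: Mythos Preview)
Your proposal is correct and follows essentially the same route as the paper. The paper does not give a separate formal proof of this corollary; it is justified by the preceding paragraph, which observes that under \cref{ass:wsd-bounded-closed} (closedness from the hypothesis, boundedness from the standing positivity assumption) \cref{wsd-yesn-sufficient} provides, for every weight, an optimal solution in $\outcomeset_{ESN}$, so preimages of these points suffice in \cref{approx-ws-approximation}. Your write-up spells out exactly this chain, including the normalisation of the weight and the verification of \cref{ass:wsd-bounded-closed}, and there are no gaps.
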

\clearpage
\printbibliography

@Article{Ben98,
	author    = {Benson, Harold P.},
	journal   = {Journal of Global Optimization},
	title     = {An Outer Approximation Algorithm for Generating All Efficient Extreme Points in the Outcome Set of a Multiple Objective Linear Programming Problem},
	year      = {1998},
	issn      = {1573-2916},
	month     = jan,
	number    = {1},
	pages     = {1--24},
	volume    = {13},
	doi       = {10.1023/a:1008215702611},
	file      = {:[Ben98]An outer approximation algorithm for generating all efficient extreme points in the outcome set of a multiobjective linear programming problem.pdf:PDF},
	groups    = {Multicriteria},
	publisher = {Springer Science and Business Media LLC},
}

@PhdThesis{Boe18,
	author = {B{\"o}kler, Friedrich K.},
	school = {TU Dortmund University},
	title  = {Output-Sensitive Complexity of Multiobjective Combinatorial Optimization Problems With an Application to the Multiobjective Shortest Path Problem},
	year   = {2018},
	groups = {Multicriteria},
	url    = {https://tcs.uos.de/_media/pubs/bokler_diss.pdf},
}

@Article{BPST24,
	author    = {Bökler, Fritz and Parragh, Sophie N. and Sinnl, Markus and Tricoire, Fabien},
	journal   = {Mathematical Methods of Operations Research},
	title     = {An Outer Approximation Algorithm for Generating the Edgeworth-Pareto Hull of Multi-Objective Mixed-Integer Linear Programming Problems},
	year      = {2024},
	issn      = {1432-5217},
	month     = jan,
	number    = {1},
	pages     = {263--290},
	volume    = {100},
	doi       = {10.1007/s00186-023-00847-8},
	file      = {:[BPST24]An outer approximation algorithm for generating the Edgeworth–Pareto hull of multi-objective mixed-integer linear programming problems.pdf:PDF},
	groups    = {Multicriteria},
	publisher = {Springer Science and Business Media LLC},
}

@Article{BRTV21,
	author    = {Bazgan, Cristina and Ruzika, Stefan and Thielen, Clemens and Vanderpooten, Daniel},
	journal   = {Theory of Computing Systems},
	title     = {The Power of the Weighted Sum Scalarization for Approximating Multiobjective Optimization Problems},
	year      = {2021},
	issn      = {1433-0490},
	month     = nov,
	number    = {1},
	pages     = {395--415},
	volume    = {66},
	doi       = {10.1007/s00224-021-10066-5},
	file      = {:[BRTV21]The Power of the Weighted Sum Scalarization for Approximating Multiobjective Optimization Problems.pdf:PDF},
	groups    = {Multicriteria},
	publisher = {Springer Science and Business Media LLC},
}

@PhdThesis{Chl25,
	author    = {Chlumsky-Harttmann, Fabian},
	title     = {Robust Multi-Objective Optimization: Analysis and Algorithmic Approaches},
	year      = {2025},
	doi       = {10.26204/KLUEDO/8859},
	file      = {:[Chl25]Robust multi-objective optimization - analysis and algorithmic approaches.pdf:PDF},
	groups    = {Multicriteria},
	school    = {Rheinland-Pfälzische Technische Universität Kaiserslautern-Landau},
}

@Article{DS92,
	author    = {Dauer, Jerald P. and Saleh, O.A.},
	journal   = {Linear Algebra and its Applications},
	title     = {A representation of the set of feasible objectives in multiple objective linear programs},
	year      = {1992},
	issn      = {0024-3795},
	month     = mar,
	pages     = {261--275},
	volume    = {166},
	doi       = {10.1016/0024-3795(92)90281-e},
	groups    = {Multicriteria},
	publisher = {Elsevier BV},
}

@Book{Ehr05,
	author    = {Ehrgott, Matthias},
	publisher = {Springer},
	title     = {Multicriteria Optimization},
	year      = {2005},
	address   = {Berlin},
	edition   = {Second},
	isbn      = {978-3-540-21398-7},
	number    = {491},
	series    = {Lecture Notes in Economics and Mathematical Systems},
	file      = {:[Ehr05]Multicriteria Optimization.pdf:PDF},
	groups    = {Textbooks, Multicriteria},
}

@Article{HR94,
	author    = {Hamacher, Horst W. and Ruhe, Günter},
	journal   = {Annals of Operations Research},
	title     = {On spanning tree problems with multiple objectives},
	year      = {1994},
	issn      = {1572-9338},
	month     = dec,
	number    = {4},
	pages     = {209--230},
	volume    = {52},
	doi       = {10.1007/bf02032304},
	groups    = {Multicriteria},
	publisher = {Springer Science and Business Media LLC},
}

@Article{KS25,
	author    = {Könen, David and Stiglmayr, Michael},
	journal   = {Discrete Applied Mathematics},
	title     = {An output-polynomial time algorithm to determine all supported efficient solutions for multi-objective integer network flow problems},
	year      = {2025},
	issn      = {0166-218X},
	month     = dec,
	pages     = {1--14},
	volume    = {376},
	doi       = {10.1016/j.dam.2025.06.001},
	file      = {:[KS25]An output-polynomial time algorithm to determine all supported efficient solutions for multi-objective integer network flow problems.pdf:PDF},
	groups    = {Multicriteria},
	publisher = {Elsevier BV},
}

@Book{Mie98,
	author    = {Miettinen, Kaisa},
	publisher = {Springer US},
	title     = {Nonlinear Multiobjective Optimization},
	year      = {1998},
	isbn      = {978-1-4615-5563-6},
	doi       = {10.1007/978-1-4615-5563-6},
	groups    = {Textbooks, Multicriteria},
	issn      = {0884-8289},
	journal   = {International Series in Operations Research &amp; Management Science},
}

@InBook{NW88ch4,
	author    = {Nemhauser, George and Wolsey, Laurence},
	chapter   = {4},
	pages     = {83--113},
	publisher = {Wiley},
	title     = {Polyhedral Theory},
	year      = {1988},
	isbn      = {978-1-1186-2737-2},
	month     = jun,
	doi       = {10.1002/9781118627372.ch4},
	file      = {:[NW88]Integer and Combinatorial Optimization.pdf:PDF},
	groups    = {Textbooks},
	booktitle = {Integer and Combinatorial Optimization},
}

@Article{PGE10,
	author    = {Przybylski, Anthony and Gandibleux, Xavier and Ehrgott, Matthias},
	journal   = {INFORMS Journal on Computing},
	title     = {A Recursive Algorithm for Finding All Nondominated Extreme Points in the Outcome Set of a Multiobjective Integer Programme},
	year      = {2010},
	issn      = {1526-5528},
	month     = aug,
	number    = {3},
	pages     = {371--386},
	volume    = {22},
	doi       = {10.1287/ijoc.1090.0342},
	file      = {:[PGE10]A Recursive Algorithm for Finding All Nondominated Extreme Points in the Outcome Set of a Multiobjective Integer Programme.pdf:PDF},
	groups    = {Multicriteria},
	publisher = {Institute for Operations Research and the Management Sciences (INFORMS)},
}

@Book{Roc70,
	author    = {Rockafellar, Ralph Tyrrell},
	publisher = {Princeton University Press},
	title     = {Convex Analysis},
	year      = {1970},
	isbn      = {978-1-4008-7317-3},
	month     = dec,
	doi       = {10.1515/9781400873173},
	groups    = {Textbooks},
}

\clearpage
\section*{Acknowledgements}
Oliver Bachtler was funded by the Deutsche Forschungsgemeinschaft (DFG, German Research Foundation) -- GRK 2982, 516090167 \enquote{Mathematics of Interdisciplinary Multiobjective Optimization} while at the RPTU Kaiserslautern-Landau.

I want to thank the following people that proofread parts of this paper:
Hannah Borgmann, Philipp Hermann, Levin Nemesch, Dorotea Redžepi.

\end{document}